\definecolor{labelkey}{rgb}{0,0.08,0.45}
\definecolor{rekey}{rgb}{0,0.6,0.0}
\definecolor{Brown}{rgb}{0.45,0.0,0.05}
\DeclareMathOperator{\weakstarly}{\rightharpoondown_{\mathrm{w*}}}
\newcommand{\scal}[2]{\langle{{#1},{#2}}\rangle}
\newcommand{\RR}{\ensuremath{\mathbb R}}
\newcommand{\RX}{\ensuremath{\,\left]-\infty,+\infty\right]}}
\newcommand{\NN}{\ensuremath{\mathbb N}}
\newcommand{\menge}[2]{\big\{{#1} \mid {#2}\big\}}
\newcommand{\To}{\ensuremath{\rightrightarrows}}
\newcommand{\spand}{\operatorname{span}}
\newcommand{\sta}{\operatorname{star}}
\newcommand{\aff}{\operatorname{aff}}
\newcommand{\dom}{\ensuremath{\operatorname{dom}}}
\newcommand{\gra}{\ensuremath{\operatorname{gra}}}
\newcommand{\intdom}{\ensuremath{\operatorname{int}\operatorname{dom}}\,}
\newcommand{\inte}{\ensuremath{\operatorname{int}}}
\newcommand{\bd}{\ensuremath{\operatorname{bdry}}}
\renewcommand{\phi}{\ensuremath{\varphi}}
\newcommand{\supp}{\ensuremath{\operatorname{supp}}}
\newcommand{\qede}{\hspace*{\fill}$\Diamond$\medskip}
\newtheorem{theorem}{Theorem}[section]
\newtheorem{lemma}[theorem]{Lemma}
\newtheorem{fact}[theorem]{Fact}
\newtheorem{corollary}[theorem]{Corollary}
\newtheorem{proposition}[theorem]{Proposition}
\theoremstyle{plain}{\theorembodyfont{\rmfamily}
}
\theoremstyle{plain}{\theorembodyfont{\rmfamily}
}
\theoremstyle{plain}{\theorembodyfont{\rmfamily}
}
\theoremstyle{plain}{\theorembodyfont{\rmfamily}
\newtheorem{example}[theorem]{Example}}
\theoremstyle{plain}{\theorembodyfont{\rmfamily}
\newtheorem{remark}[theorem]{Remark}}
\newtheorem{problem}[theorem]{Open Problem}
\theoremstyle{plain}{\theorembodyfont{\rmfamily}
}
\begin{document}


\title{\sffamily{
Sum theorems  for maximally monotone operators of type (FPV)}}

\author{
Jonathan M. Borwein\thanks{CARMA, University of Newcastle,
Newcastle, New South Wales 2308, Australia. E-mail:
\texttt{jonathan.borwein@newcastle.edu.au}. Laureate Professor at
the University of Newcastle and Distinguished Professor at  King
Abdul-Aziz University, Jeddah.}\; and Liangjin\ Yao\thanks{CARMA,
University of Newcastle,
 Newcastle, New South Wales 2308, Australia.
E-mail:  \texttt{liangjin.yao@newcastle.edu.au}.}}

\date{May 29,  2013}
\maketitle

\begin{abstract} \noindent
The most important open problem in Monotone Operator Theory
concerns the maximal monotonicity of the sum of two
maximally monotone operators provided that the classical
Rockafellar's constraint qualification holds.

In this paper, we establish the maximal monotonicity of $A+B$
provided that
$A$ and $B$ are maximally monotone operators such that
$\sta(\dom A)\cap\inte\dom B\neq\varnothing$, and
$A$ is of type (FPV). We  show that when also $\dom A$ is convex, the sum operator: $A+B$ is also of type (FPV).
Our  result generalizes and unifies several recent sum theorems.
\end{abstract}

\noindent {\bfseries 2010 Mathematics Subject Classification:}\\
{Primary  47H05;
Secondary
49N15, 52A41, 90C25}

\noindent {\bfseries Keywords:}
Constraint qualification,
convex function,
convex set,
Fitzpatrick function,
linear relation,
maximally monotone operator,
monotone operator,
operator of type (FPV),
sum problem.

\section{Introduction}

Throughout this paper, we assume that
$X$ is a real Banach space with norm $\|\cdot\|$,
that $X^*$ is the continuous dual of $X$, and
that $X$ and $X^*$ are paired by $\scal{\cdot}{\cdot}$.
Let $A\colon X\To X^*$
be a \emph{set-valued operator}  (also known as a relation, point-to-set mapping or multifunction)
from $X$ to $X^*$, i.e., for every $x\in X$, $Ax\subseteq X^*$,
and let
$\gra A := \menge{(x,x^*)\in X\times X^*}{x^*\in Ax}$ be
the \emph{graph} of $A$.
Recall that $A$ is  \emph{monotone} if
\begin{equation*}
\scal{x-y}{x^*-y^*}\geq 0,\quad \forall (x,x^*)\in \gra A\;\forall (y,y^*)\in\gra A,
\end{equation*}
and \emph{maximally monotone} if $A$ is monotone and $A$ has no proper monotone extension
(in the sense of graph inclusion).
Let $A:X\rightrightarrows X^*$ be monotone and $(x,x^*)\in X\times X^*$.
 We say $(x,x^*)$ is \emph{monotonically related to}
$\gra A$ if
\begin{align*}
\langle x-y,x^*-y^*\rangle\geq0,\quad \forall (y,y^*)\in\gra A.\end{align*}
Let $A:X\rightrightarrows X^*$ be maximally monotone. We say $A$ is
\emph{of type (FPV)} if  for every open convex set $U\subseteq X$ such that
$U\cap \dom A\neq\varnothing$, the implication
\begin{equation*}
x\in U\,\text{and}\,(x,x^*)\,\text{is monotonically related to $\gra A\cap (U\times X^*)$}
\Rightarrow (x,x^*)\in\gra A
\end{equation*}
holds. We emphasize that it remains possible that all maximally monotone operators are of type (FPV). Also  every (FPV) operator has the closure of its domain convex. See \cite{Si2, Si, BorVan,BY5} for this and more information on operators of type (FPV).
We say $A$ is a \emph{linear relation} if $\gra A$ is a
linear subspace.

 Monotone operators have proven  important
 in modern Optimization and Analysis; see, e.g., the books
\cite{BC2011,BorVan,BurIus,ButIus,ph,Si,Si2,RockWets,Zalinescu,Zeidler2A,Zeidler2B}
and the references therein. We adopt standard notation used in these
books: thus, $\dom A:= \menge{x\in X}{Ax\neq\varnothing}$ is the
\emph{domain} of $A$. Given a subset $C$ of $X$, $\inte C$ is the
\emph{interior} of $C$, $\bd{C}$ is the \emph{boundary} of $C$,
$\aff C$ is the \emph{affine hull} of $C$,  $\overline{C}$ is the
norm \emph{closure} of $C$, and $\spand C$
is the span
(the set of all finite linear combinations) of $C$.
The  \emph{intrinsic core} or \emph{relative algebraic interior} of $C$, $^{i}C$ \cite{Zalinescu},
is  defined by
$^{i}C:=\{a\in C\mid \forall x\in \aff(C-C),
\exists\delta>0, \forall\lambda\in\left[0,\delta\right]:
a+\lambda x\in C\}$. We then define $^{ic}C$ by
\begin{equation*}
^{ic}C:=\begin{cases}^{i}C,\,&\text{if $\aff C$ is closed};\\
\varnothing,\,&\text{otherwise},
\end{cases}\end{equation*}

The \emph{indicator function} of $C$, written as $\iota_C$, is defined
at $x\in X$ by
\begin{align*}
\iota_C (x):=\begin{cases}0,\,&\text{if $x\in C$;}\\
\infty,\,&\text{otherwise}.\end{cases}\end{align*}
If $C,D\subseteq X$, we set $C-D=\{x-y\mid x\in C, y\in D\}$.
  For every $x\in X$, the \emph{normal cone} operator of $C$ at $x$
is defined by $N_C(x):= \menge{x^*\in
X^*}{\sup_{c\in C}\scal{c-x}{x^*}\leq 0}$, if $x\in C$; and $N_C(x)=\varnothing$,
if $x\notin C$. We define the \emph{support points} of $C$, written as $\supp C$,
by $\supp C:=\{c\in C\mid N_C (c)\neq\{0\}\}$.
For $x,y\in X$, we set $\left[x,y\right]:=\{tx+(1-t)y\mid 0\leq t\leq 1\}$.
We define the \emph{centre} or \emph{star} of $C$ by $\sta C:=\{x\in C\mid \left[x,c\right]\subseteq C,\,
\forall c\in C\}$ \cite{BorLew}. Then $C$ is convex if and only if $\sta C=C$.

 Given $f\colon X\to \RX$, we set
$\dom f:= f^{-1}(\RR)$.
 We say $f$ is \emph{proper} if $\dom f\neq\varnothing$.
 We also set $P_X: X\times X^*\rightarrow X\colon
(x,x^*)\mapsto x$. Finally,  the \emph{open unit ball} in $X$ is
denoted by $U_X:= \menge{x\in X}{\|x\|< 1}$, the \emph{closed unit
ball} in $X$ is denoted by $B_X:= \menge{x\in X}{\|x\|\leq 1}$, and
$\NN:=\{1,2,3,\ldots\}$. We denote by $\longrightarrow$ and
$\weakstarly$ the norm convergence and weak$^*$ convergence of
nets,  respectively.

Let $A$ and $B$ be maximally monotone operators from $X$ to
$X^*$.
Clearly, the \emph{sum operator} $A+B\colon X\To X^*\colon x\mapsto
Ax+Bx: = \menge{a^*+b^*}{a^*\in Ax\;\text{and}\;b^*\in Bx}$
is monotone.
Rockafellar established the following very important result in 1970.
\begin{theorem}[Rockafellar's sum theorem]
\emph{(See  \cite[Theorem~1]{Rock70} or \cite{BorVan}.)} Suppose
that $X$ is reflexive. Let $A, B: X\rightrightarrows  X^*$ be
maximally monotone. Assume that $A$ and  $B$  satisfy the classical
\emph{constraint qualification}\[\dom A \cap\intdom B\neq
\varnothing.\] Then $A+B$ is maximally monotone.
\end{theorem}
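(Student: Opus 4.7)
The plan is to exploit the equivalence, valid in reflexive spaces, between maximal monotonicity of a monotone operator $T$ and surjectivity of $T+J$ (Minty's theorem), where $J$ is the normalized duality map. Thus it suffices to show $\ran(A+B+J)=X^*$, or equivalently, picking $(x_0,x_0^*)\in X\times X^*$ monotonically related to $\gra(A+B)$ and translating so that $x_0=0$, $x_0^*=0$, to produce $a^*\in A(0)$ and $b^*\in B(0)$ with $a^*+b^*=0$.

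First I would pass to the Fitzpatrick function framework by defining
\[
F_A(x,x^*):=\sup\{\scal{x}{a^*}+\scal{a}{x^*}-\scal{a}{a^*}:(a,a^*)\in\gra A\},
\]
and analogously $F_B$. In reflexive $X$, maximal monotonicity of $A$ is equivalent to $F_A\ge\pscal$ with equality characterizing $\gra A$, and similarly for $B$. The idea is to combine $F_A$ and $F_B$ into a single convex function on a product space whose global infimum is pinned down by the monotone relation of $(0,0)$ to $\gra(A+B)$ and whose Fenchel dual value certifies the desired decomposition $0\in A(0)+B(0)$.

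Next I would apply a Fenchel/infimal-convolution duality theorem of Attouch--Br\'ezis type to a function such as
\[
(x,y,x^*)\longmapsto F_A(x,x^*)+F_B(y,-x^*)+\iota_{\{x=y\}}(x,y),
\]
or an equivalent decoupled variant. The qualification $\dom A\cap\inte\dom B\ne\varnothing$ translates, via $0\in\inte(\dom A-\dom B)$, into the interiority hypothesis needed to force zero duality gap and primal attainment. Reflexivity of $X$ enters decisively when a dual optimizer is converted back into simultaneous membership in $\gra A$ and $\gra B$ through the equality cases of the Fitzpatrick inequalities, yielding the required $a^*\in A(0)$, $b^*\in B(0)$ with $a^*+b^*=0$.

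The hard part will be two-fold: first, selecting a combined convex function whose dual cleanly captures the additive decomposition at the origin while its primal infimum is exactly dictated by the monotone relation; and second, verifying that the classical constraint qualification $\dom A\cap\inte\dom B\ne\varnothing$ yields the precise interiority hypothesis demanded by the duality theorem. The latter step is standard in the reflexive setting via local boundedness of maximally monotone operators at interior points of their domain, and is precisely the ingredient whose failure in the general Banach setting motivates the weaker starlike hypothesis $\sta(\dom A)\cap\inte\dom B\ne\varnothing$ and the (FPV) assumption on $A$ pursued later in this paper.
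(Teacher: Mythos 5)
The paper does not give a proof of this result; it is stated as background and attributed to \cite{Rock70} and \cite{BorVan}, so there is no ``paper's own proof'' to compare against. Your sketch is recognizably the modern convex-analytic route (Fitzpatrick functions plus Attouch--Br\'ezis duality), which is quite different from Rockafellar's original 1970 argument: his proof predates Fitzpatrick functions (1988) and instead proceeded via a renorming that makes the duality map single-valued and strictly monotone, a Browder-type surjectivity result, and a reduction. So as a comparison with the cited source, your approach is a genuinely different (and today more standard) one.

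Two points to tighten in the sketch itself. First, you write that showing $\ran(A+B+J)=X^*$ is ``equivalently'' the same as taking a monotonically related $(x_0,x_0^*)$ and proving it lies in the graph; these are two distinct but equivalent characterizations of maximality, and the argument should commit to one. If you go through Minty you must solve an inclusion $z^*\in(A+B+J)x$; if you use the definitional criterion you bypass Minty entirely and instead certify $(0,0)\in\gra(A+B)$ by a duality argument. Second, the combined function $(x,y,x^*)\mapsto F_A(x,x^*)+F_B(y,-x^*)+\iota_{\{x=y\}}(x,y)$ fixes the dual variable for $B$ at $-x^*$; the object that actually represents the sum is the partial infimal convolution in the dual slot,
\begin{equation*}
\varphi(x,x^*):=\inf_{y^*\in X^*}\bigl[F_A(x,y^*)+F_B(x,x^*-y^*)\bigr],
\end{equation*}
and the content of the Attouch--Br\'ezis step is precisely that, under $0\in\inte(\dom A-\dom B)$, this infimum is attained and $\varphi$ is a proper lower semicontinuous representative function for $A+B$; reflexivity then converts $\varphi\ge\pscal$ together with the conjugate condition into maximality. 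Your ``decoupled variant'' may produce the same dual problem after conjugation, but as written the function is not the one that carries the argument. Finally, you should note that before running the duality you need to place the monotonically related point $0$ in $\overline{\dom A}\cap\overline{\dom B}$, using local boundedness of $B$ near interior points and the monotone-relation inequality; this is the step the present paper has to work hardest for in the non-reflexive setting, via Lemma~\ref{rcf:01} and Propositions~\ref{ProCVS:P1}--\ref{ProCVS:P2}.
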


Arguably, the most significant open problem in the theory concerns  the
maximal monotonicity of the sum of two maximally monotone operators
in general Banach spaces; this is called the ``sum problem''. Some
recent developments on the sum problem can be found in  Simons'
monograph \cite{Si2} and \cite{Bor1,Bor2,Bor3,BorVan,BY3, BY2, ZalVoi,
MarSva5,VV2, Yao3,Yao2,YaoPhD}. It is known, among other things, that
the sum theorem holds under Rockafellar's constraint qualification
when both operators are of dense type or when each operator has
nonempty domain interior \cite[Ch. 8]{BorVan} and \cite{ZalVoi2}.

Here we focus on the  case when $A$ is of type (FPV), and $B$ is maximally monotone
 such that
\[\sta(\dom A)\cap\inte\dom B\neq\varnothing.\] (Implicitly this means that $B$ is also of type (FPV).) In Theorem \ref{TePGV:1} we
shall  show that $A+B$ is maximally monotone. As noted  it seems possible  that all maximally monotone operators are of type (FPV).

The remainder of this paper is organized as follows. In
Section~\ref{s:aux}, we collect auxiliary results for future
reference and for the reader's convenience. In Section~\ref{s:main}, our main
result (Theorem~\ref{TePGV:1}) is presented. In Section~\ref{s:cor}, we then provide various corollaries and examples. We also pose several significant open questions
on the sum problem. We  leave the details of proof of Case 2 of Theorem~\ref{TePGV:1} to Appendix~\ref{Appenc:1}.

\section{Auxiliary Results}
\label{s:aux}

We first introduce one of Rockafellar's results.
\begin{fact}[Rockafellar]
\emph{(See \cite[Theorem~1]{Rock69} or
\cite[Theorem~27.1 and Theorem~27.3]{Si2}.)}
\label{f:referee02c}
Let $A:X\To X^*$ be  maximally monotone
 with $\inte\dom A\neq\varnothing$. Then
$\inte\dom A=\inte\overline{\dom A}$
and  $\inte\dom A$ and $\overline{\dom A}$ are both convex.
\end{fact}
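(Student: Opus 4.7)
The plan is to establish three assertions in order: convexity of $\inte\dom A$, convexity of $\overline{\dom A}$ via the identity $\overline{\dom A}=\overline{\inte\dom A}$, and the equality $\inte\overline{\dom A}=\inte\dom A$. First I would invoke the classical \emph{local boundedness} of maximally monotone operators at interior domain points: for each $x_0\in\inte\dom A$ there exist $\delta>0$ and $M>0$ with $A(x_0+\delta B_X)\subseteq MB_{X^*}$. One obtains this from Banach--Steinhaus applied to the family of convex continuous functions $x\mapsto\sup\{\scal{x-a}{a^*}\mid(a,a^*)\in\gra A,\,\|a-x_0\|\leq\delta/2\}$; monotonicity supplies pointwise boundedness on a small ball around $x_0$, which then upgrades to uniform boundedness and hence to the desired norm bound on $Ax$.

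The heart of the proof is then a \emph{line--segment lemma}: if $x_0\in\inte\dom A$ and $x_1\in\dom A$, then $(1-t)x_0+tx_1\in\inte\dom A$ for every $t\in\left[0,1\right[$. To prove it, fix such $t$, set $x_t=(1-t)x_0+tx_1$, and for each $z$ in a small ball $V$ around $x_t$ construct, by a Zorn-type monotone--extension argument confined to a thin slab around $\left[x_0,x_1\right]$ and exploiting the local bound on $A$ near $x_0$, some bounded $z^*\in X^*$ for which $(z,z^*)$ is monotonically related to $\gra A$. Maximality of $A$ then forces $(z,z^*)\in\gra A$, giving $V\subseteq\dom A$ and hence $x_t\in\inte\dom A$.

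Applying the segment lemma with $x_1\in\inte\dom A$ yields convexity of $\inte\dom A$, and applying it with arbitrary $x_1\in\dom A$ yields $\dom A\subseteq\overline{\inte\dom A}$, so $\overline{\dom A}=\overline{\inte\dom A}$ is convex as the closure of a convex set. For the last equality, I would invoke the elementary fact that any convex subset $C$ of a Banach space with $\inte C\neq\varnothing$ satisfies $\inte\overline{C}=\inte C$; applied to $C=\inte\dom A$ this completes the proof. The main obstacle is the line--segment lemma itself: maximality enters only there, and the conclusion needed is not merely $x_t\in\dom A$ but openness of $\dom A$ at $x_t$, so the monotone--extension step has to be executed \emph{uniformly} over a whole neighborhood of $x_t$, which is the delicate part.
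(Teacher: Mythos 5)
The paper does not prove this statement; it is quoted as a Fact from Rockafellar's 1969 paper and Simons' monograph (Theorems~27.1 and 27.3), so there is no in-paper proof to compare against. What follows is therefore an assessment of your argument on its own merits.

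Your outer structure --- local boundedness, then a line-segment (openness) lemma, then elementary convex geometry --- matches the classical route, and the final deductions from the lemma are correct. But the middle step, which you yourself flag as ``the delicate part,'' contains a genuine gap and not merely an omitted routine detail. Two difficulties. First, the local boundedness you sketch is established only at points of $\inte\dom A$: the argument needs $\dom A$ to fill a ball around the reference point so that monotonicity (paired against some $x^*\in Ax$ with $x$ near $x_0$) supplies pointwise finiteness of the supremum before you can upgrade. The openness lemma, however, requires local boundedness at the candidate point $z$ near $x_t$, which a priori lies only in $\inte\conv\dom A$, not in $\dom A$. Local boundedness on all of $\inte\conv\dom A$ is precisely Rockafellar's 1969 Theorem~1 and needs a different argument. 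Second, and more seriously, a ``Zorn-type monotone-extension argument confined to a thin slab'' cannot by itself produce a $z^*$ that is monotonically related to all of $\gra A$. A Zorn extension gives a maximal monotone subset of the slab times $X^*$ containing the slab-restricted graph; even if $(z,z^*)$ belongs to it, nothing guarantees $\langle z-a, z^*-a^*\rangle\geq 0$ for $(a,a^*)\in\gra A$ with $a$ outside the slab, which is exactly the hypothesis that maximality of $A$ needs before it can deliver $(z,z^*)\in\gra A$. The classical proofs close this gap with a Debrunner--Flor-type weak-$*$ compactness and finite-intersection-property argument that globalizes the monotone relation against all of $\gra A$; that step is the real content of the theorem, and it is absent from your sketch.
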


The Fitzpatrick function defined below has proven to be an important tool in Monotone
Operator Theory.
\begin{fact}[Fitzpatrick]
\emph{(See {\cite[Corollary~3.9]{Fitz88}}.)}
\label{f:Fitz}
Let $A\colon X\To X^*$ be  monotone,  and set
\begin{equation}\label{ff:def}
F_A\colon X\times X^*\to\RX\colon
(x,x^*)\mapsto \sup_{(a,a^*)\in\gra A}
\big(\scal{x}{a^*}+\scal{a}{x^*}-\scal{a}{a^*}\big),
\end{equation}
 the \emph{Fitzpatrick function} associated with $A$.
Suppose also $A$ is maximally monotone. Then for every $(x,x^*)\in X\times X^*$, the inequality
$\scal{x}{x^*}\leq F_A(x,x^*)$ is true,
and the equality holds if and only if $(x,x^*)\in\gra A$.
\end{fact}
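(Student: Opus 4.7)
The plan is to establish both the inequality $\scal{x}{x^*}\leq F_A(x,x^*)$ and the equality characterization by means of a simple algebraic rewriting of the Fitzpatrick expression, combined with a single use of maximal monotonicity.

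The key observation is the identity
\begin{equation*}
\scal{x}{a^*}+\scal{a}{x^*}-\scal{a}{a^*}=\scal{x}{x^*}-\scal{x-a}{x^*-a^*},
\end{equation*}
valid for any $(x,x^*)$ and $(a,a^*)$ in $\ZZZ$. Taking the supremum over $(a,a^*)\in\gra A$ recasts the Fitzpatrick function as
\begin{equation*}
F_A(x,x^*)=\scal{x}{x^*}-\inf_{(a,a^*)\in\gra A}\scal{x-a}{x^*-a^*}.
\end{equation*}
Everything I need flows from this reformulation: the Fitzpatrick function measures $\scal{x}{x^*}$ minus the worst-case monotonicity gap of $(x,x^*)$ against points of $\gra A$.

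From this I would proceed by case analysis. If $(x,x^*)\in\gra A$, then monotonicity yields $\scal{x-a}{x^*-a^*}\geq 0$ for every $(a,a^*)\in\gra A$, with equality attained at $(a,a^*)=(x,x^*)$; the infimum is therefore exactly $0$, giving $F_A(x,x^*)=\scal{x}{x^*}$. If instead $(x,x^*)\notin\gra A$, then by maximal monotonicity the pair $(x,x^*)$ cannot be monotonically related to $\gra A$, so there exists $(a_0,a_0^*)\in\gra A$ with $\scal{x-a_0}{x^*-a_0^*}<0$; in particular the infimum is strictly negative and $F_A(x,x^*)>\scal{x}{x^*}$.

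Combining the two cases delivers the global inequality $\scal{x}{x^*}\leq F_A(x,x^*)$ together with the equivalence that equality characterizes membership of $(x,x^*)$ in $\gra A$. There is no genuine obstacle here: the argument is essentially a translation of monotonicity into the Fitzpatrick framework, and maximal monotonicity enters only through its defining property that any external point must violate the monotonicity inequality against some element of the graph.
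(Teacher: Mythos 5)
Your proof is correct, and since the paper cites this statement as a Fact from Fitzpatrick's original article without giving its own argument, the right comparison is to the standard proof, which is exactly what you have reproduced. The algebraic identity $\scal{x}{a^*}+\scal{a}{x^*}-\scal{a}{a^*}=\scal{x}{x^*}-\scal{x-a}{x^*-a^*}$ is the heart of the matter, and your case split cleanly separates the two ways maximality enters: plain monotonicity gives nonnegativity of the gap and hence equality on the graph, while maximality (in the form that any point off the graph fails to be monotonically related to the graph) forces a strictly negative gap off the graph. Nothing is missing and no step would fail.
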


The next result is central to our arguments.

\begin{fact}
\emph{(See \cite[Theorem~3.4 and Corollary~5.6]{Voi1},
or \cite[Theorem~24.1(b)]{Si2}.)}
\label{f:referee1}
Let $A, B:X\To X^*$ be maximally monotone operators. Assume
$\bigcup_{\lambda>0} \lambda\left[P_X(\dom F_A)-P_X(\dom F_B)\right]$
is a closed subspace.
If
\begin{equation*}
F_{A+B}\geq\langle \cdot,\,\cdot\rangle\;\text{on \; $X\times X^*$},
\end{equation*}
then $A+B$ is maximally monotone.
\end{fact}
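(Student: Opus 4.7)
The plan is to leverage Fitzpatrick machinery to upgrade the evident monotonicity of $A+B$ to maximality. I would combine three ingredients: (i) a Fitzpatrick-type domination formula bounding $F_{A+B}$ by an inf-convolution of $F_A$ and $F_B$; (ii) the hypothesis $F_{A+B}\geq\pscal$, which pins $F_{A+B}$ to $\pscal$ at any monotonically related point; and (iii) an Attouch--Brezis/Fenchel duality argument enabled by the closed-subspace hypothesis, realizing the inf-convolution as an actual minimum at the value $\scal{z}{z^*}$.

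First, I would establish the domination inequality
\[
F_{A+B}(x,x^*)\leq \inf_{y^*\in X^*}\bigl(F_A(x,y^*)+F_B(x,x^*-y^*)\bigr),\quad (x,x^*)\in\ZZZ.
\]
For any $(a,c^*)\in\gra(A+B)$ one may decompose $c^*=a^*+b^*$ with $a^*\in Aa$ and $b^*\in Ba$; splitting $\scal{a}{x^*}=\scal{a}{y^*}+\scal{a}{x^*-y^*}$, the quantity $\scal{x}{c^*}+\scal{a}{x^*}-\scal{a}{c^*}$ decomposes as the sum of a candidate for $F_A(x,y^*)$ and one for $F_B(x,x^*-y^*)$. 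Taking the supremum over $(a,c^*)\in\gra(A+B)$ yields the stated bound.

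Second, fix any $(z,z^*)$ monotonically related to $\gra(A+B)$; the goal is $(z,z^*)\in\gra(A+B)$. A direct rearrangement of the monotone-related condition produces $F_{A+B}(z,z^*)\leq\scal{z}{z^*}$, so the hypothesis $F_{A+B}\geq\pscal$ gives the equality $F_{A+B}(z,z^*)=\scal{z}{z^*}$. \emph{Granting} for a moment that the inf-convolution above is achieved at some $y_0^*\in X^*$ with value $\scal{z}{z^*}$, we obtain
\[
F_A(z,y_0^*)+F_B(z,z^*-y_0^*)=\scal{z}{y_0^*}+\scal{z}{z^*-y_0^*}.
\]
Since Fact~\ref{f:Fitz} forces $F_A\geq\pscal$ and $F_B\geq\pscal$, equality must hold in each summand separately. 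Applying Fact~\ref{f:Fitz} to the maximally monotone operators $A$ and $B$ respectively, we deduce $(z,y_0^*)\in\gra A$ and $(z,z^*-y_0^*)\in\gra B$, and hence $z^*\in Az+Bz$, i.e., $(z,z^*)\in\gra(A+B)$, as required.

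The main obstacle, and the heart of the argument, is the ``granted'' attainment statement: both that the inf-convolution $\inf_{y^*}[F_A(z,y^*)+F_B(z,z^*-y^*)]$ collapses to $F_{A+B}(z,z^*)=\scal{z}{z^*}$ (no duality gap), and that it is attained. This is precisely a Fenchel--Rockafellar/Attouch--Brezis strong-duality and primal-attainment statement for the convex lsc functions $F_A$ and $F_B$ coupled along the $X$-direction. The hypothesis that $\bigcup_{\lambda>0}\lambda[P_X(\dom F_A)-P_X(\dom F_B)]$ is a closed subspace is a Rockafellar-style constraint qualification in the $X$-direction, strong enough to prevent a duality gap and secure the minimum; I would close the proof by invoking this classical convex-duality machinery as in the cited results of Voisei and Simons.
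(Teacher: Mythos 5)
This statement appears in the paper as a cited \emph{Fact} (attributed to Voisei and to Simons); the paper offers no proof of its own, so there is no internal argument to compare against. Judged on its own terms, your proposal has sound scaffolding but a genuine gap at the decisive step. The domination $F_{A+B}(x,x^*)\leq h(x,x^*):=\inf_{y^*}\bigl[F_A(x,y^*)+F_B(x,x^*-y^*)\bigr]$ is correct (split $c^*=a^*+b^*$ with $a^*\in Aa$, $b^*\in Ba$ and take suprema), as is the observation that a monotonically related $(z,z^*)$ satisfies $F_{A+B}(z,z^*)=\scal{z}{z^*}$. And if the infimum defining $h(z,z^*)$ were known to be attained at some $y_0^*$ with value $\scal{z}{z^*}$, then $F_A\geq\pscal$, $F_B\geq\pscal$ and the equality case of Fact~\ref{f:Fitz} would indeed force $(z,y_0^*)\in\gra A$ and $(z,z^*-y_0^*)\in\gra B$, giving $(z,z^*)\in\gra(A+B)$.

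The gap is the claim that the closed-subspace hypothesis eliminates a ``primal duality gap'' so that $h(z,z^*)=F_{A+B}(z,z^*)$. Your own domination yields only $h(z,z^*)\geq\scal{z}{z^*}$; the reverse inequality at $(z,z^*)$ is exactly the conclusion one would read off \emph{a posteriori} once maximality is established, and it does not follow from Attouch--Brezis in the way you invoke it. What the constraint qualification actually delivers, via Attouch--Brezis applied to the Fitzpatrick functions, is an exact formula for the \emph{conjugate} $h^*$ as a partial inf-convolution of $F_A^*$ and $F_B^*$; since each of $F_A^*$, $F_B^*$ dominates the transposed duality pairing (because $A$ and $B$ are maximally monotone), this gives $h^*(x^*,x)\geq\scal{x}{x^*}$ for all $(x,x^*)$. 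Together with $h\geq F_{A+B}\geq\pscal$, the Burachik--Svaiter/Simons representability theorem then shows that $\menge{(x,x^*)}{h(x,x^*)=\scal{x}{x^*}}$ is the graph of a maximally monotone operator, and exactness of the infimum identifies that set with $\gra(A+B)$. Your closing move --- ``invoking this classical convex-duality machinery as in the cited results of Voisei and Simons'' --- is circular, since those \emph{are} the result to be proved; the missing substance is the conjugate inequality for $h^*$ together with the representability theorem, neither of which is present in your sketch.
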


We next cite several results regarding operators of type (FPV).

\begin{fact}[Simons]
\emph{(See \cite[Theorem~46.1]{Si2}.)}
\label{f:referee01}
Let $A:X\To X^*$ be a maximally monotone linear relation.
Then $A$ is of type (FPV).
\end{fact}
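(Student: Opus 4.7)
The plan is to show $(x,x^*)\in\gra A$ by verifying, via maximal monotonicity, that $(x,x^*)$ is monotonically related to every element of $\gra A$, not merely to those with first coordinate in $U$. Equivalently, by the Fitzpatrick characterization (Fact~\ref{f:Fitz}), it suffices to establish $F_A(x,x^*)\leq\scal{x}{x^*}$, since the reverse inequality always holds and equality characterizes the graph.

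Fix any $(a,a^*)\in\gra A$ and any $(a_0,a_0^*)\in\gra A$ with $a_0\in U\cap\dom A$ (possible by hypothesis). Linearity of $\gra A$ gives $(a_t,a_t^*):=(1-t)(a_0,a_0^*)+t(a,a^*)\in\gra A$ for every $t\in\RR$. I would then study the scalar function
\[
p(t):=\scal{x-a_t}{x^*-a_t^*},
\]
a polynomial of degree at most $2$ in $t$ with leading coefficient $\scal{a-a_0}{a^*-a_0^*}\geq 0$ by monotonicity; hence $p$ is a convex quadratic. Openness of $U$ around $a_0$ yields $\tau>0$ with $a_t\in U$, and therefore $p(t)\geq 0$, for all $t\in[0,\tau)$; the goal is $p(1)\geq 0$.

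The main obstacle is precisely that convexity of $p$ together with nonnegativity on a one-sided neighborhood of $0$ does not force $p(1)\geq 0$, so the single-segment argument has to be supplemented by the full linear-subspace structure of $\gra A$. Setting $G(y,y^*):=\scal{x}{y^*}+\scal{y}{x^*}-\scal{y}{y^*}$, the hypothesis translates to $G(y,y^*)\leq\scal{x}{x^*}$ on $\gra A\cap(U\times X^*)$, and what must be shown is that this bound extends to the entire subspace $\gra A$. To do this I would combine openness of $U$ with the two-fold scaling invariance of $\gra A$: for any direction $(y,y^*)\in\gra A$, the shifted perturbations $(a_0+\lambda y,\,a_0^*+\lambda y^*)$ remain in $\gra A\cap(U\times X^*)$ for $|\lambda|$ sufficiently small, producing a convex-quadratic inequality in $\lambda$ with leading coefficient $\scal{y}{y^*}\geq 0$; then the independent scaling $(y,y^*)\mapsto(\mu y,\mu y^*)\in\gra A$ is brought in, and the two cases $\scal{y}{y^*}>0$ (where a discriminant/Cauchy--Schwarz-type bound $[\scal{x}{y^*}+\scal{y}{x^*}]^2\leq 4\,\scal{y}{y^*}\scal{x}{x^*}$ emerges after optimising in $\mu$) and $\scal{y}{y^*}=0$ (the degenerate skew case, which requires a careful argument ruling out $F_A(x,x^*)=+\infty$ since that would contradict maximal monotonicity) are handled separately to yield $G(y,y^*)\leq\scal{x}{x^*}$ globally on $\gra A$. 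Passing to the supremum gives $F_A(x,x^*)\leq\scal{x}{x^*}$, and Fact~\ref{f:Fitz} concludes $(x,x^*)\in\gra A$. The entire delicacy of the proof lies in this extension step from $\gra A\cap(U\times X^*)$ to $\gra A$.
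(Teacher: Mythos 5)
The paper does not actually prove this statement; it is quoted as a ``Fact'' and attributed to \cite[Theorem~46.1]{Si2}. So there is no internal proof to compare against, and your proposal has to stand on its own. Your opening reduction is fine (via Fact~\ref{f:Fitz}, it is equivalent to show $F_A(x,x^*)\leq\scal{x}{x^*}$, i.e.\ $G(y,y^*)\leq\scal{x}{x^*}$ on all of $\gra A$), and you are right that the naive segment argument fails: a convex quadratic nonnegative on a neighbourhood of one endpoint need not be nonnegative at the other endpoint. But the patches you then propose do not close the gap.

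First, the discriminant bound $[\scal{x}{y^*}+\scal{y}{x^*}]^2\leq 4\,\scal{y}{y^*}\scal{x}{x^*}$ is \emph{not} an instance of Cauchy--Schwarz for the form $q(u,v):=\scal{u_1}{v_2}+\scal{v_1}{u_2}$ on $\gra A$, because $(x,x^*)\notin\gra A$ (if it were, there would be nothing to prove). It can only emerge by maximizing the concave quadratic $\mu\mapsto \mu[\scal{x}{y^*}+\scal{y}{x^*}]-\mu^2\scal{y}{y^*}$, and the hypothesis delivers the inequality $\leq\scal{x}{x^*}$ only for $\mu$ in the range where $a_0+\mu y\in U$. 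Nothing forces the maximizer $\mu^{*}=[\scal{x}{y^*}+\scal{y}{x^*}]/(2\scal{y}{y^*})$ to lie in that range. Moreover, the ``independent scaling'' $(y,y^*)\mapsto(\mu y,\mu y^*)$ is not independent at all: since $\gra A$ is a subspace, $\{(a_0,a_0^*)+\lambda(y,y^*)\}$ and $\{(a_0,a_0^*)+\lambda(\mu y,\mu y^*)\}$ are literally the same one-parameter family, with $\lambda$ reparametrized. Replacing $(y,y^*)$ by $(\mu y,\mu y^*)$ multiplies both the admissible $\lambda$-range and $\mu^{*}$ by $1/\mu$, so whether $\mu^{*}$ lies in the admissible range is scaling-invariant, and the step genuinely does not go through.

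Second, the remark that ``$F_A(x,x^*)=+\infty$ would contradict maximal monotonicity'' is simply false. For a maximally monotone $A$, Fact~\ref{f:Fitz} gives $F_A\geq\scal{\cdot}{\cdot}$ everywhere with equality on $\gra A$, but $F_A$ is very commonly $+\infty$ away from a neighbourhood of the graph (e.g.\ already for $A=\partial|\cdot|$ on $\RR$). So in the degenerate case $\scal{y}{y^*}=0$ one needs a genuine argument that $\scal{x}{y^*}+\scal{y}{x^*}=0$ for every such $(y,y^*)\in\gra A$, otherwise $G(\mu y,\mu y^*)=\mu[\scal{x}{y^*}+\scal{y}{x^*}]$ is unbounded and the Fitzpatrick bound fails; nothing in your sketch supplies that argument. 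These two gaps are essential, and in fact Simons' proof of Theorem~46.1 relies on considerably more machinery than the quadratic-optimization picture in your proposal.
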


The following result presents a sufficient condition for
a maximally monotone operator to be of type (FPV).
\begin{fact}[Simons and Verona-Verona]
\emph{(See \cite[Theorem~44.1]{Si2}, \cite{VV1} or \cite{Bor2}.)}
\label{f:referee02a}
Let $A:X\To X^*$ be maximally monotone. Suppose that
for every closed convex subset $C$ of $X$
with $\dom A \cap \inte C\neq \varnothing$, the operator
$A+N_C$ is maximally monotone.
Then $A$ is of type  (FPV).
\end{fact}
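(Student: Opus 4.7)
The goal is to verify the type-(FPV) property of $A$ directly from its definition: fix an open convex set $U\subseteq X$ with $U\cap \dom A\neq\varnothing$, a point $x\in U$, and $x^*\in X^*$ such that $(x,x^*)$ is monotonically related to $\gra A\cap(U\times X^*)$; one must show $(x,x^*)\in\gra A$. The overall plan is to manufacture a closed convex set $C\subseteq U$ to which the hypothesis applies, and then read off $x^*\in Ax$ from the maximality of $A+N_C$.

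I would fix $x_0\in U\cap\dom A$ and, using that $U$ is open and $[x_0,x]\subseteq U$ is compact, choose $r>0$ with
\[C\;:=\;[x_0,x]+rB_X\;\subseteq\;U.\]
This $C$ is closed and convex (Minkowski sum of a compact convex set and the closed unit ball), and since both $x_0+rB_X\subseteq C$ and $x+rB_X\subseteq C$, the points $x_0$ and $x$ lie in $\inte C$. In particular $x_0\in \dom A\cap\inte C\neq\varnothing$, so the standing hypothesis applies and $A+N_C$ is maximally monotone.

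The key step is to show that $(x,x^*)$ is monotonically related to $\gra(A+N_C)$. Take $(y,y^*+z^*)\in\gra(A+N_C)$ with $y^*\in Ay$ and $z^*\in N_C(y)$. Since $C\subseteq U$, we have $y\in U$, so $(y,y^*)\in\gra A\cap(U\times X^*)$, and the assumption on $(x,x^*)$ yields $\scal{x-y}{x^*-y^*}\geq 0$. Because $x\in C$ and $z^*\in N_C(y)$, the definition of the normal cone also gives $\scal{x-y}{z^*}\leq 0$. Adding these two inequalities yields $\scal{x-y}{x^*-(y^*+z^*)}\geq 0$. Maximality of $A+N_C$ then forces $x^*\in (A+N_C)(x)=Ax+N_C(x)$; but $x\in\inte C$ implies $N_C(x)=\{0\}$, hence $x^*\in Ax$ as required.

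The main obstacle, as I see it, is the construction of $C$: it must satisfy three competing demands at once, namely to contain a point of $\dom A$ in its interior (so the hypothesis fires), to sit entirely inside $U$ (so that the monotone-relation assumption on $(x,x^*)$ can be invoked at every $y\in C$), and to contain $x$ in its interior (so the normal-cone contribution at $x$ collapses to zero). The thickening $[x_0,x]+rB_X$, obtained from the compactness of the segment $[x_0,x]$ and the openness of $U$, is precisely what meets all three demands simultaneously; once this geometric device is in place, the remainder of the argument is a short bookkeeping of monotonicity inequalities.
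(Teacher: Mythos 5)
Your proof is correct, and it is essentially the standard argument for this result (the paper itself does not prove it but cites Simons, Verona--Verona, and Borwein, where precisely this kind of construction appears). The thickened segment $C=[x_0,x]+rB_X$ is exactly the right device: it is closed and convex, sits inside $U$, and puts both $x_0\in\dom A$ and $x$ into $\inte C$ simultaneously, so that the hypothesis fires, the monotone-relation bookkeeping for $A+N_C$ goes through, and $N_C(x)=\{0\}$ strips the normal-cone term away at the end.
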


\begin{fact}\label{Ll:l1}\emph{(See \cite[Lemma~2.5]{BWY4}.)}
Let $C$ be  a nonempty closed convex
subset of $X$ such that $\inte C\neq \varnothing$.
Let $c_0\in \inte C$ and suppose that $z\in X\smallsetminus C$.
Then there exists
$\lambda\in\left]0,1\right[$ such
that $\lambda c_0+(1-\lambda)z\in\bd C$.
\end{fact}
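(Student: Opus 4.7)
The plan is to use an intermediate-value argument along the line segment $[z,c_0]$. Parametrize this segment by the continuous map $\gamma:[0,1]\to X$, $\gamma(\lambda)=\lambda c_0+(1-\lambda)z$, so that $\gamma(0)=z\notin C$ while $\gamma(1)=c_0\in\inte C\subseteq C$. Intuitively one walks from $z$ to $c_0$ and picks out the first moment at which one enters $C$; this entry point must lie on $\bd C$, otherwise the walker would already have been inside $C$ a moment earlier.

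To make this precise, I would consider the set
\[
T:=\bigl\{\lambda\in[0,1]\mid\gamma(\lambda)\in C\bigr\},
\]
which is closed since $C$ is closed and $\gamma$ is continuous, and nonempty since $1\in T$. Set $\lambda^{*}:=\inf T$. Because $T$ is closed, $\lambda^{*}\in T$ and in particular $\gamma(\lambda^{*})\in C$. Three verifications complete the proof: first, $\lambda^{*}>0$, because if $\lambda^{*}=0$ then closedness of $T$ would force $0\in T$, i.e.\ $z\in C$, contradicting the hypothesis; second, $\lambda^{*}<1$, because $c_0\in\inte C$ together with continuity of $\gamma$ yields some $\varepsilon>0$ with $\gamma(\lambda)\in\inte C\subseteq C$ for all $\lambda\in(1-\varepsilon,1]$, hence $\lambda^{*}\leq 1-\varepsilon$; and third, $\gamma(\lambda^{*})\in\bd C$. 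For the last, suppose for contradiction that $\gamma(\lambda^{*})\in\inte C$. Then $\gamma^{-1}(\inte C)$ is an open neighborhood of $\lambda^{*}$ in $[0,1]$ contained in $T$, so some $\lambda<\lambda^{*}$ lies in $T$, violating the definition of infimum. Taking $\lambda:=\lambda^{*}\in\zeroun$ gives the desired conclusion.

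There is no genuine obstacle here: the argument is a textbook accessibility/intermediate value exercise, and the only facts used are continuity of the affine parametrization, closedness of $C$, and openness of $\inte C$. The step that deserves the most care is ruling out $\gamma(\lambda^{*})\in\inte C$, which is the one place where the interior hypothesis on $C$ is essential.
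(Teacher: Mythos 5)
Your proof is correct, and since the paper merely cites this fact from an external reference without reproving it, there is nothing in the present paper to compare against; your intermediate-value ``first point of entry'' argument is the standard one for such accessibility lemmas. A minor observation: convexity of $C$ is never used in your argument, which relies only on closedness of $C$, openness of $\inte C$, and continuity of $\gamma$, so the conclusion in fact holds for an arbitrary nonempty closed set with nonempty interior --- the convexity hypothesis is carried along only because it is the setting in which the fact is applied elsewhere.
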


\begin{fact}[Boundedness below]\emph{(See  \cite[Fact~4.1]{BY1}.)}\label{extlem}
Let $A:X\rightrightarrows X^*$ be monotone and $x\in\inte\dom A$.
 Then there exist $\delta>0$ and  $M>0$ such that
 $x+\delta B_X\subseteq\dom A$ and
 $\sup_{a\in x+\delta B_X}\|Aa\|\leq M$.
Assume that $(z,z^*)$ is monotonically related to $\gra A$. Then
\begin{align*}
\langle z-x, z^*\rangle
\geq \delta\|z^*\|-(\|z-x\|+\delta) M.
\end{align*}
\end{fact}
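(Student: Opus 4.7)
The plan is to split the proof into two independent pieces of quite different character: first, establish the local boundedness assertion ($\exists\,\delta,M$ with $x+\delta B_X\subseteq\dom A$ and $\sup_{a\in x+\delta B_X}\|Aa\|\leq M$); second, exploit that boundedness together with the monotonicity inequality to obtain the pointwise estimate on $\langle z-x,z^*\rangle$.

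For the first step, I would invoke the classical \emph{local boundedness theorem for monotone operators}: any monotone operator $A\colon X\rightrightarrows X^*$ is locally bounded at each $x\in\inte\dom A$. Since $x\in\inte\dom A$, there is some $\delta_0>0$ with $x+\delta_0 B_X\subseteq\dom A$, and local boundedness then provides $\delta\in\zeroun\cdot\delta_0$ and $M>0$ with the required bound. If forced to prove this from scratch, the standard argument considers the convex lower semicontinuous function $\Phi(y):=\sup\{\langle y-a,a^*\rangle\mid (a,a^*)\in\gra A,\,a\in x+\tfrac{\delta_0}{2}B_X\}$; monotonicity gives $\Phi\geq 0$ on $x+\tfrac{\delta_0}{2}B_X$, and continuity of convex functions on the interior of their domain yields a local bound on $\Phi$ near $x$, which translates into a uniform bound on $\|Aa\|$ on a smaller ball. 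I would present this step mostly by citation.

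For the second step, I would fix an arbitrary $u\in B_X$ and set $a:=x+\delta u\in x+\delta B_X\subseteq\dom A$. Picking any $a^*\in Aa$ (so $\|a^*\|\leq M$), the assumption that $(z,z^*)$ is monotonically related to $\gra A$ gives
\begin{equation*}
\langle z-a,z^*-a^*\rangle\geq 0.
\end{equation*}
Writing $z-a=(z-x)-\delta u$ and expanding yields
\begin{equation*}
\langle z-x,z^*\rangle\;\geq\;\delta\langle u,z^*\rangle+\langle z-x,a^*\rangle-\delta\langle u,a^*\rangle.
\end{equation*}
Bounding the last two terms by $\|z-x\|\,\|a^*\|$ and $\delta\|u\|\,\|a^*\|$ respectively and using $\|a^*\|\leq M$, $\|u\|\leq 1$, I get $\langle z-x,z^*\rangle\geq\delta\langle u,z^*\rangle-(\|z-x\|+\delta)M$. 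Taking the supremum over $u\in B_X$ and recalling that $\sup_{\|u\|\leq 1}\langle u,z^*\rangle=\|z^*\|$ by the definition of the dual norm gives exactly the claimed inequality.

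The only genuine obstacle is the local boundedness in the first step, a classical but nontrivial result; the second step is a routine exploitation of monotonicity, Cauchy--Schwarz, and the dual-norm formula, and should occupy only a few lines.
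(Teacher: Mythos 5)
Your proof is correct. The paper states this as a Fact cited from \cite[Fact~4.1]{BY1} without proof, and your argument is precisely the standard one used there: first invoke Rockafellar's local boundedness theorem for monotone operators to get $\delta$ and $M$, then for an arbitrary $u\in B_X$ put $a=x+\delta u$, expand the monotonicity inequality $\langle z-a,z^*-a^*\rangle\ge 0$, bound the terms involving $a^*$ by $M$, and take the supremum over $u\in B_X$ to convert $\delta\langle u,z^*\rangle$ into $\delta\|z^*\|$. No gap; nothing to change.
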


\begin{fact}[Voisei and Z\u{a}linescu]
\emph{(See  \cite[Corollary~4]{ZalVoi}.)}
\label{voiZalsm}
Let $A, B:X\To X^*$ be  maximally monotone.  Assume that $^{ic}(\dom A)\neq\varnothing,
^{ic}(\dom B)\neq\varnothing$ and $0\in^{ic}\left[\dom A-\dom B\right]$.
Then $A+B$ is maximally monotone.
\end{fact}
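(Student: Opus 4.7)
The plan is to invoke Fact~\ref{f:referee1} applied to the pair $(A,B)$, which reduces the maximal monotonicity of $A+B$ to two verifications: that $\bigcup_{\lambda>0}\lambda\bigl[P_X(\dom F_A)-P_X(\dom F_B)\bigr]$ is a closed linear subspace, and that $F_{A+B}\ge\pscal$ on $X\times X^*$.

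For the closed-subspace condition, I would use the standard sandwich $\dom A\subseteq P_X(\dom F_A)\subseteq\clconv\dom A$: the left inclusion is immediate from $F_A=\pscal$ on $\gra A$ (Fact~\ref{f:Fitz}), and the right follows by Hahn--Banach separation of $\dom F_A$ from any point outside $\clconv\dom A$, exploiting the affine structure of the defining expression in Fact~\ref{f:Fitz}. Together with the analogous inclusions for $B$, the target union is squeezed between $\cone[\dom A-\dom B]$ and $\cone[\clconv\dom A-\clconv\dom B]$. By the definition of ${}^{ic}$, the hypothesis $0\in{}^{ic}[\dom A-\dom B]$ asserts that $\aff[\dom A-\dom B]$ is a closed linear subspace coinciding with $\cone[\dom A-\dom B]$; since affine hulls are stable under convex closure, both bounding cones collapse to this closed subspace, and the condition follows.

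For the Fitzpatrick majorization, fix $(x_0,x_0^*)\in X\times X^*$. Direct expansion of the supremum definition of $F_{A+B}$, writing each $c^*\in(A+B)a$ as $a^*+b^*$ with $a^*\in Aa$, $b^*\in Ba$, gives
\[F_{A+B}(x_0,x_0^*)=\sup_{a\in\dom A\cap\dom B}\bigl[\langle a,x_0^*\rangle+\sigma_{Aa}(x_0-a)+\sigma_{Ba}(x_0-a)\bigr],\]
where $\sigma_S(u):=\sup_{s\in S}\langle u,s\rangle$. Meanwhile, applying Fact~\ref{f:Fitz} termwise yields
\[h(x_0,x_0^*):=\inf_{y^*\in X^*}\bigl[F_A(x_0,y^*)+F_B(x_0,x_0^*-y^*)\bigr]\ge\langle x_0,x_0^*\rangle.\]
An elementary weak-duality calculation, introducing $y^*\in X^*$ as a coupling variable in the decomposition $c^*=a^*+b^*$, produces the one-sided inequality $F_{A+B}(x_0,x_0^*)\le h(x_0,x_0^*)$. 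The reverse inequality is the heart of the matter, and is exactly what requires the constraint qualification. I would obtain it by applying Z\u{a}linescu's intrinsic-core version of the Fenchel--Moreau conjugate-sum rule to the convex lsc partial sections $\varphi(y^*):=F_A(x_0,y^*)$ and $\psi(z^*):=F_B(x_0,z^*)$ on $X^*$, after verifying that $0\in{}^{ic}[\dom A-\dom B]$ unfolds into the corresponding intrinsic-core CQ on the effective domains of $\varphi^*$ and $\psi^*$. This strong duality forces $F_{A+B}(x_0,x_0^*)=h(x_0,x_0^*)\ge\langle x_0,x_0^*\rangle$.

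The main obstacle is the strong-duality step: verifying that the Voisei--Z\u{a}linescu hypothesis $0\in{}^{ic}[\dom A-\dom B]$, which is a purely algebraic statement about the primal domains in $X$, lifts to the correct intrinsic-core CQ for the partially-dualized convex functions on $X^*$ needed to close the duality gap between $F_{A+B}$ and the infimal-convolution bound $h$. Once this is done, both hypotheses of Fact~\ref{f:referee1} are verified, and maximal monotonicity of $A+B$ follows immediately.
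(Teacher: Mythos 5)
This statement is quoted in the paper as an external result (cited to Voisei--Z\u{a}linescu); the paper gives no proof of it, so your argument has to stand on its own, and I compare it against what would actually be needed. The first half of your plan is sound: the reduction to Fact~\ref{f:referee1}, the sandwich $\dom A\subseteq P_X(\dom F_A)\subseteq\clconv\dom A$ from Fact~\ref{f:Fitz} plus separation, and the observation that $0\in{}^{ic}\left[\dom A-\dom B\right]$ makes $\cone\left[\dom A-\dom B\right]=\aff\left[\dom A-\dom B\right]$ a closed subspace which also contains $\clconv\dom A-\clconv\dom B$ (translate by a common point of the two domains), do give that $\bigcup_{\lambda>0}\lambda\left[P_X(\dom F_A)-P_X(\dom F_B)\right]$ is a closed subspace.

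The genuine gap is the ``strong duality'' step, which is not merely hard but is not deliverable by the tool you invoke. Fix $x_0$ and set $g(a):=\iota_{\dom A}(a)-\sup_{a^*\in Aa}\scal{x_0-a}{a^*}$ and $k(a):=\iota_{\dom B}(a)-\sup_{b^*\in Ba}\scal{x_0-a}{b^*}$; then your own expansion says $F_{A+B}(x_0,\cdot)=(g+k)^*$, while the sections are $\varphi=F_A(x_0,\cdot)=g^*$ and $\psi=F_B(x_0,\cdot)=k^*$, so the inequality you need is $(g+k)^*\geq g^*\infconv k^*$. The Fenchel--Moreau/Attouch--Br\'ezis sum rule (in any intrinsic-core form) applies to \emph{convex} lsc functions and yields, under its CQ, $(\varphi+\psi)^*=\varphi^*\infconv\psi^*$, or dually that $\varphi\infconv\psi=(\varphi^*+\psi^*)^*$ is exact; neither statement bounds $(g+k)^*$ from below, because $g,k$ are not convex and every convexification inequality runs the wrong way: $g^{**}+k^{**}\leq\clconv(g+k)$, hence conjugation gives $(g+k)^*\leq(g^{**}+k^{**})^*=g^*\infconv k^*$, which merely reproves the trivial bound $F_{A+B}\leq h$ that you already have. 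There is also a CQ mismatch: the sum rule for $\varphi,\psi$ would require an intrinsic-core condition on $\dom\varphi-\dom\psi\subseteq X^*$ (the dual-variable sections of $\dom F_A$, $\dom F_B$), and the hypothesis $0\in{}^{ic}\left[\dom A-\dom B\right]$, a condition on subsets of $X$, does not ``unfold'' into it; applying the rule instead to $\varphi^*,\psi^*$ only gives exactness and weak$^*$ lower semicontinuity of $\varphi\infconv\psi$, again with no lower bound on $F_{A+B}$. So the chain $F_{A+B}\geq h\geq\pscal$ cannot be closed by pointwise convex duality of sections; this is exactly where the cited proof of Voisei and Z\u{a}linescu has to work with representative functions and maximality criteria in the product space $X\times X^*$ (and where the present paper, for its own theorem, instead uses the delicate (FPV) arguments of Propositions~\ref{ProCVS:P1} and \ref{ProCVS:P2}).
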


The proof of the next Lemma~\ref{rcf:01}  follows closely the lines of
 that of \cite[Lemma~2.10]{BY3}. It generalizes
  both \cite[Lemma~2.10]{BY3} and \cite[Lemma~2.10]{BWY9}.
\begin{lemma}\label{rcf:01}
Let $A:X\To X^*$ be   monotone, and let
$B:X\rightrightarrows X^*$ be a maximally monotone operator. Suppose
that $\sta(\dom A) \cap \inte \dom B\neq \varnothing$. Suppose also that
$(z,z^*)\in X\times X^*$ with $z\in\dom A$ is monotonically related to $\gra (A+ B)$.
 Then  $z\in  \dom B$.

\end{lemma}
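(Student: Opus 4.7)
The plan is to argue by contradiction: suppose $z\notin\dom B$. Fix $y_0\in\sta(\dom A)\cap\inte\dom B$; Fact~\ref{f:referee02c} gives that $\overline{\dom B}$ is convex with $\inte\overline{\dom B}=\inte\dom B$, so $y_0\in\inte\overline{\dom B}$. The first task is to produce a point $w\in[y_0,z]\cap\bd\overline{\dom B}\cap\dom A$. If $z\in\overline{\dom B}$ then, since $z\notin\dom B\supseteq\inte\overline{\dom B}$, one has $z\in\bd\overline{\dom B}$ and I take $w:=z$. Otherwise $z\notin\overline{\dom B}$, and Fact~\ref{Ll:l1} applied with $C:=\overline{\dom B}$ and $c_0:=y_0$ yields $\lambda\in\zeroun$ with $w:=\lambda y_0+(1-\lambda)z\in\bd\overline{\dom B}$. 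In both cases $w\neq y_0$, the star property puts $w\in[y_0,z]\subseteq\dom A$, and Hahn--Banach supplies a supporting functional $w^*\in N_{\overline{\dom B}}(w)$ with $\|w^*\|=1$; since $y_0\in\inte\overline{\dom B}$ and $w^*\neq 0$, one has the strict inequality $\langle w-y_0,w^*\rangle>0$.

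Next I approximate $w$ from inside by setting $y_n:=(1-1/n)w+(1/n)y_0$. By convexity of $\inte\dom B$, $y_n\in\inte\dom B$, and $[y_0,w]\subseteq[y_0,z]\subseteq\dom A$ gives $y_n\in\dom A$. Picking $a_n^*\in Ay_n$ and $b_n^*\in By_n$ yields $(y_n,a_n^*+b_n^*)\in\gra(A+B)$. The essential observation is that $y_n-y_0$, $z-y_n$ and $w-y_0$ are all positive multiples of the single vector $z-y_0$; so the monotone relation $\langle z-y_n,z^*-a_n^*-b_n^*\rangle\ge 0$ reduces to
\[
\langle z-y_0,a_n^*+b_n^*\rangle\le\langle z-y_0,z^*\rangle.
\]
Pairing this with monotonicity of $A$ at a fixed $a_0^*\in Ay_0$ (which yields $\langle z-y_0,a_n^*\rangle\ge\langle z-y_0,a_0^*\rangle$) gives an upper bound $\langle z-y_0,b_n^*\rangle\le K$ independent of $n$. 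On the other hand, Fact~\ref{extlem} applied to $B$ at $y_0$ with $(y_n,b_n^*)\in\gra B$ provides
\[
\langle y_n-y_0,b_n^*\rangle\ge\delta\|b_n^*\|-(\|y_n-y_0\|+\delta)M,
\]
and since $y_n-y_0$ is a positive multiple of $z-y_0$, combining these forces $\sup_n\|b_n^*\|<\infty$.

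By Banach--Alaoglu, a subnet $(b_{n_\beta}^*)$ weak-$*$ converges to some $b^*\in X^*$. Using boundedness of $\|b_{n_\beta}^*\|$ and $y_{n_\beta}\to w$ in norm, the inequality $\langle y_{n_\beta}-c,b_{n_\beta}^*-c^*\rangle\ge 0$ passes to the limit $\langle w-c,b^*-c^*\rangle\ge 0$ for every $(c,c^*)\in\gra B$, so maximal monotonicity of $B$ forces $(w,b^*)\in\gra B$; in particular $w\in\dom B$. If $w=z$ this is the required contradiction. Otherwise $w\neq z$ (so $z\notin\overline{\dom B}$), and I invoke the recession-cone property: since $tw^*\in N_{\overline{\dom B}}(w)$ for every $t\ge 0$, a direct check gives $\langle w-c,b^*+tw^*-c^*\rangle\ge 0$ for all $(c,c^*)\in\gra B$, so $b^*+tw^*\in Bw$ by maximality of $B$. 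Choosing $a_w^*\in Aw$, the pair $(w,a_w^*+b^*+tw^*)\in\gra(A+B)$, and the monotone relation with $(z,z^*)$ forces
\[
t\,\langle z-w,w^*\rangle\le\langle z-w,z^*-a_w^*-b^*\rangle\quad\text{for every }t\ge 0.
\]
Since $z-w$ is a positive multiple of $w-y_0$ and $\langle w-y_0,w^*\rangle>0$, the left-hand side tends to $+\infty$ while the right-hand side is a fixed constant, the desired contradiction.

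The hardest step is the norm bound $\sup_n\|b_n^*\|<\infty$, where the directional inequalities coming from the monotone relation, from monotonicity of $A$, and from Fact~\ref{extlem} must be carefully aligned along the one-dimensional segment $[y_0,z]$. The recession-cone argument at the end is the crucial ingredient that closes the case $z\notin\overline{\dom B}$ without any reflexivity hypothesis on $X$, replacing a compactness argument for $A+B$ by an unboundedness argument for $B$ alone.
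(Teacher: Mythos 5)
Your proof is correct, but it follows a genuinely different route from the paper's. The paper reduces to $(0,0)\in\gra A\cap\gra B$, $0\in\sta(\dom A)\cap\inte\dom B$, and then invokes Fact~\ref{voiZalsm} to obtain maximal monotonicity of $N_{[0,z]}+B$; since $z\notin\dom B$ forces $(z,z^*)\notin\gra(N_{[0,z]}+B)$, maximality produces a point $(\lambda z,x^*+y^*)$ violating monotonicity, and the star hypothesis (which places $\lambda z$ in $\dom A$) plus a short sign-chase at $a^*\in A(\lambda z)$ closes the contradiction in a few lines. You instead locate the boundary point $w\in[y_0,z]\cap\bd\overline{\dom B}\cap\dom A$ directly, take a support functional there, run an approximation $y_n\to w$ from the interior, derive the norm bound $\sup_n\|b_n^*\|<\infty$ by aligning the monotone relation, monotonicity of $A$, and Fact~\ref{extlem} along the segment, pass to a weak-$*$ cluster point to land $w\in\dom B$, and then use the recession direction $tw^*\in N_{\overline{\dom B}}(w)\subseteq\ran(B-b^*)$ to blow up the right-hand side of the monotone relation. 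What the paper's route buys is brevity and conceptual economy (one appeal to a known interiority sum theorem does all the work); what your route buys is that it avoids Fact~\ref{voiZalsm} entirely and instead reuses the norm-bound/Banach--Alaoglu/recession machinery that already drives Lemmas~\ref{LeWExc:1}--\ref{LeWExc:3} and Proposition~\ref{ProCVS:P2}, so the paper's toolkit becomes more self-contained. Two small points to tighten: the statement ``by convexity of $\inte\dom B$, $y_n\in\inte\dom B$'' should really invoke the line segment principle \cite[Theorem~1.1.2(ii)]{Zalinescu} combined with Fact~\ref{f:referee02c}, since $w$ need not lie in $\inte\dom B$; and the passage to the limit $\langle w-c,b^*-c^*\rangle\ge 0$ along the subnet deserves the one-line justification that $\langle y_{n_\beta}-w,b^*_{n_\beta}\rangle\to 0$ because $\|b^*_{n_\beta}\|$ is bounded (this is exactly \cite[Corollary~4.1]{BY1}, which the paper cites in the proof of Proposition~\ref{ProCVS:P2}).
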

\begin{proof}
We can and do suppose that $(0,0)\in\gra A\cap\gra B$ and $0\in\sta(\dom A) \cap \inte \dom B$.
Suppose to the contrary that $z\notin\dom B$. Then we have $z\neq0$.
We claim that
\begin{align}
N_{\left[0,z\right]}+B \quad\text{is maximally monotone}.\label{Lrtg:1}
\end{align}
Because $z\neq0$,  we have $\tfrac{1}{2}z\in{^{ic}(\dom N_{\left[0,z\right]})}$.
 Clearly,
$^{ic}(\dom B)\neq\varnothing$ and $0\in{^{ic}
\left[\dom N_{\left[0,z\right]}-\dom B\right]}$.
By Fact~\ref{voiZalsm},
$N_{\left[0,z\right]}+B$ is maximally monotone and hence \eqref{Lrtg:1} holds.
Since $(z, z^*)\notin\gra (N_{\left[0,z\right]}+B)$,
there exist $\lambda \in\left[0,1\right]$
and $x^*, y^*\in X^*$ such that $(\lambda z, x^*)\in\gra N_{\left[0,z\right]}$,
$(\lambda z, y^*)\in\gra B$ and
\begin{align}
\langle z-\lambda z, z^*-x^*-y^*\rangle<0.\label{Lrtg:2}
\end{align}

Now $\lambda<1$, since $(\lambda z, y^*)\in\gra B$ and $z\notin\dom B$,  by
\eqref{Lrtg:2},
\begin{align}
\langle  z, -x^*\rangle+\langle  z, z^*-y^*\rangle=\langle  z, z^*-x^*-y^*\rangle<0.
\label{Lrtg:3}
\end{align}
Since $(\lambda z, x^*)\in\gra N_{\left[0,z\right]}$,
we have $\langle z-\lambda z, x^*\rangle\leq0$. Then  $\langle z, -x^*\rangle\geq0$. Thus
\eqref{Lrtg:3} implies that
\begin{align}
\langle  z, z^*-y^*\rangle<0.
\label{Lrtg:4}
\end{align}
Since $0\in\sta(\dom A)$ and $z\in\dom A$,
$\lambda z\in\dom A$.
By the assumption on $(z,z^*)$, we have
\begin{align*}
\langle  z-\lambda z, z^*- a^*-y^*\rangle\geq0,
\quad\forall a^*\in A(\lambda z).
\end{align*}
Thence, $\langle z, z^*- a^*-y^*\rangle\geq0$
 and hence
\begin{align}
\langle z, z^*-y^*\rangle\geq\langle z, a^*\rangle,
\quad\forall a^*\in A(\lambda z).\label{Lrtg:5}
\end{align}
Next we show that
\begin{align}
\langle z, a^*\rangle\geq0,\quad \exists a^*\in A(\lambda z).\label{Lrtg:6}
\end{align}
We consider two cases.

\emph{Case 1}: $\lambda=0$. Then take $a^*=0$ to see that \eqref{Lrtg:6} holds.

\emph{Case 2}: $\lambda\neq0$. Let $a^*\in A(\lambda z)$.
 Since $(\lambda z, a^*)\in \gra A$,
$\langle \lambda z, a^*\rangle=\langle \lambda z-0, a^*-0\rangle\geq0$
and hence $\langle  z,
a^*\rangle\geq0$. Hence \eqref{Lrtg:6} holds.

Combining \eqref{Lrtg:5} and \eqref{Lrtg:6},
\begin{align*}
\langle z, z^*-y^*\rangle\geq0,\quad\text{which contradicts \eqref{Lrtg:4}}.
\end{align*}
Hence $z\in\dom B$.
\end{proof}

The proof of Lemma~\ref{LeWExc:1} is modelled on
that of \cite[Proposition~3.1]{Yao3}. It is the first in a sequence of lemmas we give that will allow us to apply Fact \ref{f:referee1}.

\begin{lemma}\label{LeWExc:1}
Let $A:X\To X^*$ be   monotone, and let
$B:X\rightrightarrows X^*$ be maximally monotone. Let $(z, z^*)\in X\times X^*$. Suppose
 $x_0\in\dom A \cap \inte \dom B$ and that there exists a sequence $(a_n, a^*_n)_{n\in\NN}$ in $\gra A\cap\Big(\dom B\times X^*\Big)$ such that $(a_n)_{n\in\NN}$ converges to a point in
$\left[x_0,z\right[$, while
\begin{align}
\langle z-a_n, a^*_n\rangle\longrightarrow+\infty.\label{LeWExc:1E1}
\end{align}
Then
 $F_{A+B}(z,z^*)=+\infty$.

\end{lemma}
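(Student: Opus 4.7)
The plan is to prove $F_{A+B}(z,z^*)=+\infty$ by evaluating the Fitzpatrick function along sum-graph points constructed from the given sequence. Since $a_n\in\dom B$, for each $n$ pick $b_n^*\in Ba_n$, so that $(a_n,a_n^*+b_n^*)\in\gra(A+B)$. Substituting into the definition \eqref{ff:def} and rearranging yields
$$F_{A+B}(z,z^*)\;\geq\;\langle z-a_n,a_n^*\rangle+\langle z-a_n,b_n^*\rangle+\langle a_n,z^*\rangle.$$
By hypothesis $\langle z-a_n,a_n^*\rangle\longrightarrow+\infty$, and since $a_n$ converges the summand $\langle a_n,z^*\rangle$ is bounded. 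Thus the whole argument reduces to showing that the middle term $\langle z-a_n,b_n^*\rangle$ is bounded below independently of $n$.

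This middle term is delicate, because $b_n^*\in Ba_n$ may be unbounded as $a_n$ drifts toward the boundary of $\dom B$. To tame it, let $a_\infty\in[x_0,z[$ denote the limit of $(a_n)$, write $a_\infty=(1-t)x_0+tz$ with $t\in[0,1)$, and set $\epsilon:=t/(1-t)\geq 0$. A direct computation yields $(1+\epsilon)a_\infty-\epsilon z=x_0$, so the auxiliary points $y_n:=(1+\epsilon)a_n-\epsilon z$ converge to $x_0\in\inte\dom B$. By Fact~\ref{extlem} applied to $B$ at $x_0$, there exist $\delta,M>0$ with $x_0+\delta B_X\subseteq\dom B$ and $\|Bu\|\leq M$ on that ball, so for all sufficiently large $n$ we have $y_n\in x_0+\delta B_X$, and one can pick $y_n^*\in By_n$ with $\|y_n^*\|\leq M$.

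When $t>0$, hence $\epsilon>0$, the identity $a_n-y_n=\epsilon(z-a_n)$ combined with monotonicity of $B$ on $(a_n,b_n^*)$ and $(y_n,y_n^*)$ yields
$$\langle z-a_n,b_n^*\rangle\;\geq\;\langle z-a_n,y_n^*\rangle,$$
and the right-hand side is bounded since $z-a_n\longrightarrow(1-t)(z-x_0)$ and $\|y_n^*\|\leq M$. When $t=0$, one has $a_\infty=x_0\in\inte\dom B$, so eventually $a_n\in x_0+\delta B_X$ and $\|b_n^*\|\leq M$ directly. In either case $\langle z-a_n,b_n^*\rangle$ is bounded below, so letting $n\to\infty$ in the Fitzpatrick estimate forces $F_{A+B}(z,z^*)=+\infty$.

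The main obstacle, and the whole point of the lemma, is the middle term; the key device is the affine adjustment $y_n=(1+\epsilon)a_n-\epsilon z$, which pulls $a_n$ back to the interior point $x_0$ so that Fact~\ref{extlem} becomes applicable and a single monotonicity inequality transfers the bound from the controlled $y_n^*$ to the potentially unbounded $b_n^*$.
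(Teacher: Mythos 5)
Your proof is correct, and it takes a genuinely different and more elementary route than the paper's. The paper picks $v_n^*\in Ba_n$ and splits on whether $(v_n^*)$ is bounded: the bounded case is immediate, and the unbounded case requires normalizing $v_n^*/\|v_n^*\|$, extracting a weak$^*$ convergent subnet via Banach--Alaoglu, and running a contradiction argument (combining the lower estimate from Fact~\ref{extlem} at $x_0$ with the hypothetical upper bound on $\limsup\langle z-a_n,v_n^*\rangle$) to conclude that $\limsup_n\langle z-a_n,v_n^*\rangle=+\infty$. You instead sidestep both the case split and the compactness argument by the affine pullback $y_n=(1+\epsilon)a_n-\epsilon z$ (with $\epsilon=t/(1-t)$, so $y_n\to x_0$), which combined with the algebraic identity $a_n-y_n=\epsilon(z-a_n)$ and a single application of monotonicity of $B$ gives the uniform lower bound $\langle z-a_n,b_n^*\rangle\geq\langle z-a_n,y_n^*\rangle\geq-\|z-a_n\|M$ for all large $n$. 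This is strictly stronger than the paper's conclusion (a uniform lower bound rather than a $\limsup=+\infty$ along a subsequence), it avoids subnets entirely, and the $t=0$ degenerate case is handled directly by local boundedness at $x_0$. Both approaches rely on the same key input, Fact~\ref{extlem} (local boundedness of a monotone operator at an interior point of its domain), but your use of it is more transparent. One small point worth making explicit if you write this up: you need $y_n\in\dom B$ to select $y_n^*$, which holds for $n$ large because $y_n\to x_0\in\inte\dom B$; the estimate is then applied only for those $n$, which is all that is needed since only the tail matters for forcing $F_{A+B}(z,z^*)=+\infty$.
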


\begin{proof}
Since $a_n\in\dom B$ for every $n\in\NN$,
we may pick $v^*_n\in B(a_n)$.
We again consider two cases.

\emph{Case 1}:  $(v^*_n)_{n\in\NN}$ is bounded.

Then we  have
\begin{align*}
F_{A+B}(z,z^*)&\geq \sup_{\{n\in\NN\}}\left[\langle a_n,z^*\rangle+ \langle z-a_n, a_n^*\rangle
+\langle z-a_n,v_n^*\rangle \right]\\
&\geq \sup_{\{n\in\NN\}}\left[-\| a_n\|\cdot\|z^*\|
+ \langle z-a_n, a_n^*\rangle
-\|z-a_n\|\cdot \|v_n^*\| \right]\\
&=+\infty\quad\text{(by \eqref{LeWExc:1E1} and
 the boundedness of $(v^*_n)_{n\in\NN}$)}.
\end{align*}
Hence $F_{A+B}(z,z^*)=+\infty$.

\emph{Case 2}: $(v^*_n)_{n\in\NN}$ is unbounded.

By  assumption, there exists $0\leq\lambda<1$ such that \begin{align}
a_n\longrightarrow x_0+\lambda(z-x_0).\label{LeWExc:1E2}
\end{align}
We first show that
\begin{align}
\limsup_{n\rightarrow\infty}\,\langle z-a_n, v_n^*\rangle=+\infty.
\label{FCG:2}
\end{align}
Since $(v^*_n)_{n\in\NN}$ is unbounded and,
after passing to a subsequence if necessary, we may assume that
$\|v^*_n\|\neq 0,\forall n\in\NN$ and that $\|v^*_n\|\rightarrow +\infty$.
By $x_0\in\inte\dom B$ and Fact~\ref{extlem},
there exist $\delta_0>0$ and $K_0>0$ such that
\begin{align}
\langle a_n-x_0, v^*_n\rangle\geq \delta_0\|v^*_n\|-(\|a_n-x_0\|+\delta_0)K_0.\label{RVT:10a}
\end{align}
Then we have
\begin{align}
 \langle a_n-x_0, \frac{v^*_n}{\|v^*_n\|}\rangle
\geq\delta_0-\frac{(\|a_n-x_0\|+\delta_0) K_0}{\|v^*_n\|},
 \quad \forall n\in\NN.\label{FCG:3}
\end{align}
By the Banach-Alaoglu Theorem
(see \cite[Theorem~3.15]{Rudin}),
 there  exist a weak* convergent \emph{subnet}
$(\frac{v^*_\gamma}{\|v^*_\gamma\|})_{\gamma\in\Gamma}$ of $(\frac{v^*_n}{\|v^*_n\|})_{n\in\NN}$
such that
\begin{align}\frac{v^*_\gamma}{\|v^*_\gamma\|}\weakstarly
 v^*_{\infty}\in X^*.\label{FCGG:9}\end{align}
Using \eqref{LeWExc:1E2} and
taking the limit in \eqref{FCG:3} along the subnet, we obtain
\begin{align}
 \langle \lambda (z-x_0),  v^*_{\infty}\rangle
\geq  \delta_0.
\end{align}
Hence $\lambda$ is strictly positive and
\begin{align}
 \langle  z-x_0,  v^*_{\infty}\rangle
\geq  \tfrac{\delta_0}{\lambda}>0.\label{FCG:03}
\end{align}
Now  assume contrary to \eqref{FCG:2} that there exists $M>0$ such that
\begin{align*}
\limsup_{n\rightarrow\infty}\langle z-a_n, v_n^*\rangle
< M.
\end{align*}

Then, for all $n$ sufficiently large,
\begin{align*}
\langle z-a_n, v^*_n\rangle< M+1,
\end{align*}
and so
\begin{align}
\langle z-a_n, \frac{v^*_n}{\|v^*_n\|}\rangle<
\frac{M+1}{\|v^*_n\|}\label{Rev:a}.
\end{align}
Then by \eqref{LeWExc:1E2} and \eqref{FCGG:9},
taking  the limit in \eqref{Rev:a} along the subnet again, we see that
\begin{align*}
(1-\lambda)\langle  z-x_0,  v^*_{\infty}\rangle\leq 0.
\end{align*}
Since $\lambda<1$, we see $\langle z-x_0,  v^*_{\infty}\rangle\leq 0$
 contradicting \eqref{FCG:03},
and\eqref{FCG:2} holds.
By \eqref{LeWExc:1E1} and \eqref{FCG:2},
\begin{align*}
F_{A+B}(z,z^*)&\geq \sup_{{n\in\NN}}
\left[\langle a_n,z^*\rangle+\langle z-a_n,a_n\rangle
+\langle z-a_n, v_n^*\rangle \right]=+\infty.
\end{align*}
Hence
\begin{align*}
F_{A+B}(z,z^*)=+\infty,
\end{align*}
as asserted.
\end{proof}

We also need the following two  lemmas.

\begin{lemma}\label{LeWExc:2}
Let $A:X\To X^*$ be  monotone, and let
$B:X\rightrightarrows X^*$ be maximally monotone. Let $(z, z^*)\in X\times X^*$. Suppose
that $x_0\in\dom A \cap \inte \dom B$ and that there exists a sequence $(a_n)_{n\in\NN}$ in $\dom A\cap\dom B$ such that $(a_n)_{n\in\NN}$ converges to a point in
$\left[x_0,z\right[$, and that
\begin{align}
a_n\in\bd\dom B,\quad\forall n\in\NN.\label{LeWExc:2E1}
\end{align}
Then
 $F_{A+B}(z,z^*)=+\infty$.
\end{lemma}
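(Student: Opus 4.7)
\noindent\textbf{Proof plan for Lemma~\ref{LeWExc:2}.}
The driving idea is that a boundary point $a_n\in\bd\dom B$ supplies a nonzero normal direction to the closed convex set $\overline{\dom B}$, and for a maximally monotone $B$ this direction can be added freely to any value of $B$ at $a_n$. This will allow us to exhibit elements of $(A+B)a_n$ whose pairing with $z-a_n$ is arbitrarily large, forcing $F_{A+B}(z,z^*)=+\infty$. Concretely, by Fact~\ref{f:referee02c} we have $\inte\dom B=\inte\overline{\dom B}$; since $a_n\in\dom B\cap\bd\dom B$ and $\overline{\dom B}$ has nonempty interior, $a_n$ is a support point of $\overline{\dom B}$, and Hahn--Banach gives $w_n^*\in N_{\overline{\dom B}}(a_n)$ with $\|w_n^*\|=1$. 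A short maximality argument (test $(a_n,v^*+tw_n^*)$ against any $(y,y^*)\in\gra B$ and use $\langle a_n-y,w_n^*\rangle\geq 0$) yields $Ba_n+\RP w_n^*\subseteq Ba_n$, so if $a_n^*\in Aa_n$ and $v_n^*\in Ba_n$ then $a_n^*+v_n^*+tw_n^*\in (A+B)a_n$ for every $t\geq 0$.

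The next step is to show that $\langle z-a_n,w_n^*\rangle$ stays positive, at least along a subnet. Applying Fact~\ref{extlem} to the maximally monotone operator $N_{\overline{\dom B}}$ at the interior point $x_0\in\inte\overline{\dom B}$---where $N_{\overline{\dom B}}$ vanishes, so the constant $M$ in Fact~\ref{extlem} may be taken to be $0$---we obtain $\delta_0>0$ with
\begin{equation*}
\langle a_n-x_0,w_n^*\rangle\geq\delta_0\|w_n^*\|=\delta_0,\qquad\forall n\in\NN.
\end{equation*}
By Banach--Alaoglu, pass to a weak$^*$ convergent subnet $w_\gamma^*\weakstarly w_\infty^*$. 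Writing $a_n\to x_0+\lambda(z-x_0)$ with $\lambda\in[0,1[$ and splitting $\langle a_\gamma-x_0,w_\gamma^*\rangle=\langle a_\gamma-a_\infty,w_\gamma^*\rangle+\langle a_\infty-x_0,w_\gamma^*\rangle$, the first summand vanishes in the limit (norm times bounded) and the second converges to $\lambda\langle z-x_0,w_\infty^*\rangle$. Hence $\lambda\langle z-x_0,w_\infty^*\rangle\geq\delta_0$, which forces $\lambda>0$ and $\langle z-x_0,w_\infty^*\rangle\geq\delta_0/\lambda>0$. An identical splitting then yields
\begin{equation*}
\lim_\gamma\langle z-a_\gamma,w_\gamma^*\rangle=(1-\lambda)\langle z-x_0,w_\infty^*\rangle\geq\tfrac{(1-\lambda)\delta_0}{\lambda}>0,
\end{equation*}
so there exists an index $\gamma_0$ for which $\eta:=\langle z-a_{\gamma_0},w_{\gamma_0}^*\rangle>0$.

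Finally, fixing $a_{\gamma_0}^*\in Aa_{\gamma_0}$ and $v_{\gamma_0}^*\in Ba_{\gamma_0}$ and using the ray structure in $(A+B)a_{\gamma_0}$ from the first step, the definition \eqref{ff:def} of the Fitzpatrick function gives, for every $t\geq 0$,
\begin{equation*}
F_{A+B}(z,z^*)\geq\langle a_{\gamma_0},z^*\rangle+\langle z-a_{\gamma_0},a_{\gamma_0}^*+v_{\gamma_0}^*\rangle+t\eta,
\end{equation*}
and letting $t\to+\infty$ yields $F_{A+B}(z,z^*)=+\infty$. The two delicate points I expect are (i) establishing the inclusion $Ba_n+\RP w_n^*\subseteq Ba_n$ cleanly from maximal monotonicity, and (ii) the simultaneous limit passage where $a_\gamma\to a_\infty$ in norm while $w_\gamma^*\weakstarly w_\infty^*$ only weakly$^*$; the latter is handled by the standard trick of replacing the moving $a_\gamma$ by its norm limit before using weak$^*$ continuity.
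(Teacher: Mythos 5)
Your proposal is correct and follows essentially the same path as the paper's proof: both select a unit normal $w_n^*\in N_{\overline{\dom B}}(a_n)$ (Hahn--Banach), bound $\langle a_n-x_0,w_n^*\rangle\geq\delta$ via the interior point $x_0$, extract a weak$^*$ convergent subnet to deduce $\lambda>0$ and $\langle z-x_0,w_\infty^*\rangle>0$, and then exploit $B(a_n)+\RP\,w_n^*\subseteq B(a_n)$ to force $F_{A+B}(z,z^*)=+\infty$. The only distinction is cosmetic: the paper argues by contradiction (assuming $(z,z^*)\in\dom F_{A+B}$, dividing by $k$ and letting $k\to\infty$ to get $\langle z-a_n,w_n^*\rangle\leq 0$ for all $n$), whereas you compute the limit $\lim_\gamma\langle z-a_\gamma,w_\gamma^*\rangle=(1-\lambda)\langle z-x_0,w_\infty^*\rangle>0$ directly and pin down a single index $\gamma_0$ with $\eta>0$ to drive $t\to+\infty$, which is a mildly cleaner presentation of the same idea; the detour through Fact~\ref{extlem} with ``$M=0$'' also gives the very inequality the paper obtains by testing the normal cone against $x_0+\delta B_X$.
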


\begin{proof}
Suppose to the contrary that
\begin{align}
(z,z^*)\in\dom F_{A+B}.\label{LeWExc:2E2}
\end{align}
By the assumption, there exists $0\leq\lambda<1$ such that \begin{align}
a_n\longrightarrow x_0+\lambda(z-x_0).\label{LeWExc:2E3}
\end{align}
By the Separation Theorem and Fact~\ref{f:referee02c},
there exists $(y^*_n)_{n\in\NN}$ in  $X^*$ such that
$\|y^*_n\|=1$ and $y^*_n\in N_{\overline{\dom B}}(a_n)$.
Thus $ky^*_n\in N_{\overline{\dom B}}(a_n), \forall k>0$.
Since $x_0\in\inte\dom B$, there exists $\delta>0$ such that $x_0+\delta B_X\subseteq \dom B$. Thus
\begin{align*}
\big\langle y^*_n, a_n\big\rangle&\geq\sup\big\langle y^*_n, x_0+ \delta B_X\big\rangle
\geq \big\langle y^*_n, x_0\big\rangle+ \sup\big\langle y^*_n,\delta B_X\big\rangle=\big\langle y^*_n, x_0\big\rangle+\delta\|y^*_n\|\\
&=\big\langle y^*_n, x_0\big\rangle+\delta.
\end{align*}
Hence
\begin{align}
\big\langle y^*_n, a_n-x_0\big\rangle&\geq\delta.\label{FPCoS:es10a}
\end{align}
By the Banach-Alaoglu Theorem
(see \cite[Theorem~3.15]{Rudin}), there exists a weak$^*$ convergent and bounded subnet $(y^*_i)_{i\in O}$ such that
\begin{align}
y^*_i\weakstarly y^*_{\infty}\in X^*.\label{FPCoS:es11a}
\end{align}
Then \eqref{FPCoS:es10a} and \eqref{LeWExc:2E3} imply that
\begin{align*}
\big\langle y^*_{\infty}, \lambda (z-x_0)\big\rangle
\geq \delta.
\end{align*}
Thus, as before, $\lambda>0$ and
\begin{align}
\big\langle y^*_{\infty}, z-x_0\big\rangle
\geq \frac{\delta}{\lambda}>0.\label{FPCoS:es12a}
\end{align}
Since $B$ is maximally monotone, $B=B+N_{\overline{\dom B}}$.
As $a_n\in \dom A\cap\dom B$, we have
\begin{align*}
F_{A+B}(z,z^*)\geq\sup\left[
\big\langle z-a_n,  A(a_n)\big\rangle +
\big\langle z-a_n, B(a_n)+ ky^*_n\big\rangle+\langle z^*, a_n\big\rangle\right],\quad\forall n\in\NN,\forall k>0.
\end{align*}
Thus
\begin{align*}
\frac{F_{A+B}(z,z^*)}{k}\geq\sup\left[
\Big\langle z-a_n,  \frac{A(a_n)}{k}\Big\rangle +
\Big\langle z-a_n, \frac{B(a_n)}{k}+ y^*_n\Big\rangle+\frac{\langle z^*, a_n\big\rangle}{k}\right],\quad\forall n\in\NN,\forall k>0.
\end{align*}
Since $(z,z^*)\in\dom F_{A+B}$ by \eqref{LeWExc:2E2}, on letting $k\longrightarrow +\infty$ we obtain
\begin{align*}
0\geq
\big\langle z-a_n, y^*_n\big\rangle,\quad\forall n\in\NN.
\end{align*}
Combining with \eqref{FPCoS:es11a}, \eqref{LeWExc:2E3} and taking the limit along the bounded subnet in the above inequality, we have
\begin{align*}
0\geq
\big\langle (1-\lambda) (z-x_0), y^*_{\infty}\big\rangle.
\end{align*}
Since $\lambda<1$,
\begin{align*}
\big\langle z-x_0, y^*_{\infty}\big\rangle\leq0,
\end{align*}
which contradicts \eqref{FPCoS:es12a}.

Hence $F_{A+B}(z,z^*)=+\infty$.
\end{proof}

\begin{lemma}\label{LeWExc:3}
Let $A:X\To X^*$ be   of type (FPV). Suppose $x_0\in\dom A$ but
  that
$z\notin\overline{\dom A}$. Then there is a sequence
$(a_n, a^*_n)_{n\in\NN}$ in $\gra A$ so that $(a_n)_{n\in\NN}$ converges to a point in
$\left[x_0,z\right[$ and
\begin{align*}
\langle z-a_n, a^*_n\rangle\longrightarrow+\infty.
\end{align*}
\end{lemma}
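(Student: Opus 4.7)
The plan is to apply the (FPV) property of $A$ at the pair $(z,ny^*)$, where $y^*$ is a Hahn--Banach functional strictly separating $z$ from $\overline{\dom A}$, on a shrinking family of tubular open convex neighborhoods of the segment $[x_0,z]$.

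Since $A$ is of type (FPV), its domain closure is convex (as noted in the introduction), so $\overline{\dom A}$ is a closed convex set avoiding $z$. Strict Hahn--Banach separation then supplies $y^*\in X^*$ and a constant $c>0$ with
\begin{equation*}
\langle z-a,\,y^*\rangle\;\geq\;c,\qquad \forall\,a\in\dom A,
\end{equation*}
and moreover $d:=\di(z,\overline{\dom A})>0$.

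For each $n\in\NN$ put $U_n:=[x_0,z]+\tfrac{1}{n}U_X$. As the Minkowski sum of a convex set with the open convex ball $\tfrac{1}{n}U_X$, the set $U_n$ is open and convex; it contains both $x_0$ and $z$, and it meets $\dom A$ at $x_0$. Because $z\notin\dom A$, the pair $(z,ny^*)$ does not belong to $\gra A$, so the contrapositive of the (FPV) implication applied to $U_n$ produces $(a_n,a_n^*)\in\gra A\cap (U_n\times X^*)$ with
\begin{equation*}
\langle z-a_n,\,ny^*-a_n^*\rangle\;<\;0,
\end{equation*}
which rearranges, using the separation inequality, to
\begin{equation*}
\langle z-a_n,\,a_n^*\rangle\;>\;n\,\langle z-a_n,\,y^*\rangle\;\geq\;nc\;\longrightarrow\;+\infty.
\end{equation*}

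To finish, write $a_n=p_n+r_n$ with $p_n\in[x_0,z]$ and $\|r_n\|<1/n$. Since $[x_0,z]$ is the continuous image of the compact interval $[0,1]$, it is norm-compact, so $(p_n)$ admits a subsequence $(p_{n_k})$ converging to some $p\in[x_0,z]$; then $a_{n_k}\to p$. Because $a_{n_k}\in\dom A$ forces $\|z-a_{n_k}\|\geq d$, passing to the limit gives $\|z-p\|\geq d>0$, so $p\neq z$ and $p\in[x_0,z[$. The main obstacle is the design of the test set: $U_n$ must simultaneously contain $z$ (so that (FPV) can be tested at $z$), meet $\dom A$ (so that the (FPV) premise is non-vacuous), and shrink onto $[x_0,z]$ (so that the accumulation point lands on that segment). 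The tube $[x_0,z]+\tfrac{1}{n}U_X$ is tailor-made for these three requirements, and the separating functional $y^*$ then converts the (FPV) violation into the desired unboundedness.
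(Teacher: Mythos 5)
Your proof is correct and follows essentially the same route as the paper: the tubular neighborhoods $U_n=[x_0,z]+\tfrac1n U_X$, the (FPV) property applied to $(z,ny^*)$ to extract $(a_n,a_n^*)\in\gra A\cap(U_n\times X^*)$, a separating functional to convert the violated monotone relation into unboundedness of $\langle z-a_n,a_n^*\rangle$, and norm-compactness of $[x_0,z]$ together with $\di(z,\overline{\dom A})>0$ to land the limit in $[x_0,z[$. The one small variation is that you invoke convexity of $\overline{\dom A}$ (a fact about (FPV) operators stated in the introduction) to strictly separate $z$ from the whole of $\overline{\dom A}$, giving the uniform bound $\langle z-a_n,y^*\rangle\geq c$ before any subsequence is taken; the paper instead separates $z$ only from $x_0$ and therefore must first pass to a convergent subsequence $a_n\to x_0+\lambda(z-x_0)$ with $\lambda<1$ to bound $\langle z-a_n,y_0^*\rangle$ away from zero eventually — your version is marginally cleaner, the paper's is slightly more self-contained in that it does not rely on the convexity of the domain closure.
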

\begin{proof}
Since $z\notin\overline{\dom A}$,
$z\neq x_0$. Thence there exist $\alpha>0$ and $y^*_0\in X^*$
such that $\langle y^*_0,z-x_0\rangle\geq\alpha$.
Set
 \begin{align*}
U_n:= [x_0, z]+\tfrac{1}{n}U_X,\quad \forall n\in\NN.
\end{align*}
Since $x_0\in\dom A$,
 $U_n\cap \dom A\neq\varnothing$.
Now $(z, ny^*_0)\notin\gra A$ and $z\in U_n$. As
$A$ is of type (FPV),
 there exist $(a_n,a^*_n)_{n\in\NN}$ in $\gra A$ with $a_n\in U_n$ such that
 \begin{align}
 \langle z-a_n,a^*_n\rangle>\langle z-a_n,ny^*_0\rangle.\label{LeWExc:3E1}
 \end{align}
 As $a_n\in U_n$, $(a_n)_{n\in\NN}$ has a subsequence convergent
 to an element in  $\left[x_0,z\right]$.  We can assume that
\begin{align}
a_n\longrightarrow x_0+\lambda(z-x_0),\quad \text{where}\quad 0\leq \lambda\leq 1,\label{LeWExc:3E2}
\end{align}
and since $z\notin\overline{\dom A}$, we have $\lambda<1$. Thus, $x_0+\lambda(z-x_0)\in\left
[x_0,z\right[$.

Thus by \eqref{LeWExc:3E2} and $
\langle z-x_0, y^*_0\rangle\geq\alpha >0$,
\begin{align*}
\big\langle z-a_n,y^*_0\big\rangle\longrightarrow(1-\lambda)\langle z-x_0 ,y^*_0\rangle\geq(1-\lambda)\alpha>0.
\end{align*}

Hence there exists $N_0\in\NN$ such that for every $n\geq N_0$
\begin{align}
\big\langle z-a_n,y^*_0\big\rangle\geq
\frac{ (1-\lambda)\alpha}{2}>0.
\end{align}
Appealing to \eqref{LeWExc:3E1}, we have
\begin{align*}
 \big\langle z-a_n,a^*_n\big\rangle
 >\frac{ (1-\lambda)\alpha}{2}n>0,\quad\forall n\geq N_0,
 \end{align*}
and so
$\big\langle z-a_n,a^*_n\big\rangle\longrightarrow+\infty$. This completes the proof.
\end{proof}

\section{Our main result}
\label{s:main}
Before we come to our main result, we need the following two technical results which let us place points in the  closures of the domains of $A$ and $B$.
The proof of Proposition~\ref{ProCVS:P1}
 follows  in part that of \cite[Theorem~3.4]{Yao3}.

\begin{proposition}\label{ProCVS:P1}
Let $A:X\To X^*$ be   of type (FPV), and let
$B:X\rightrightarrows X^*$ be maximally monotone. Suppose
that $\dom A \cap \inte \dom B\neq\varnothing$.
Let $(z,z^*)\in X\times X^*$ with $z\in\overline{\dom B}$.
Then
\begin{align*}F_{A+B}(z,z^*)\geq\langle z,z^*\rangle.
\end{align*}
\allowdisplaybreaks
\end{proposition}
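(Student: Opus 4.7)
The plan is to split the argument on whether $z \in \overline{\dom A}$, using throughout a fixed pivot $x_0 \in \dom A \cap \inte \dom B$ provided by the constraint qualification, and exploiting that $\overline{\dom A}$ is convex (since $A$ is of type (FPV)) and that $\overline{\dom B}$ is convex with $\inte\overline{\dom B}=\inte\dom B$ (by Fact~\ref{f:referee02c}).

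\textbf{Case 1: $z \notin \overline{\dom A}$.} Because $A$ is of type (FPV) and $x_0\in\dom A$, Lemma~\ref{LeWExc:3} delivers a sequence $(a_n,a_n^*)\in\gra A$ with $a_n\to x_0+\lambda(z-x_0)$ for some $\lambda\in[0,1)$ and $\langle z-a_n,a_n^*\rangle\to+\infty$. The accessibility lemma for convex sets with nonempty interior, applied to $\overline{\dom B}$, yields $[x_0,z)\subseteq\inte\dom B$, so the limit point of $(a_n)$ lies in $\inte\dom B$ and $a_n\in\inte\dom B\subseteq\dom B$ eventually. Lemma~\ref{LeWExc:1} then gives $F_{A+B}(z,z^*)=+\infty$, trivially dominating $\langle z,z^*\rangle$.

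\textbf{Case 2: $z \in \overline{\dom A}$.} Convexity of $\overline{\dom A}$ puts $[x_0,z]\subseteq\overline{\dom A}$; combined with the previous accessibility remark, $[x_0,z)\subseteq\overline{\dom A}\cap\inte\dom B\subseteq\overline{\dom A\cap\inte\dom B}$ (the last inclusion follows by approximating from $\dom A$ and invoking openness of $\inte\dom B$). Hence I can select $\lambda_n\uparrow 1$ and, by diagonalization, $a_n\in\dom A\cap\inte\dom B$ with $a_n\to z$. Picking $a_n^*\in A(a_n)$ and $b_n^*\in B(a_n)$, the pair $(a_n,a_n^*+b_n^*)\in\gra(A+B)$ supplies the estimate
\[
F_{A+B}(z,z^*)\;\geq\;\langle a_n,z^*\rangle+\langle z-a_n,a_n^*\rangle+\langle z-a_n,b_n^*\rangle.
\]
The first term converges to $\langle z,z^*\rangle$. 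If both $(a_n^*)$ and $(b_n^*)$ stay bounded, the two cross-terms vanish and the required inequality follows in the limit. If either escapes, I normalize, extract a weak$^*$-convergent subnet via the Banach--Alaoglu theorem, and---closely paralleling Case~2 in the proof of Lemma~\ref{LeWExc:1}---combine monotonicity of $A$ against a fixed $(x_0,x_0^*)\in\gra A$ with the boundedness-below estimate of Fact~\ref{extlem} applied to $B$ at $x_0$ to pin strict positivity of $\langle z-x_0,\cdot\rangle$ onto the limit direction. This forces the right-hand side above to diverge to $+\infty$ along a subsequence, so $F_{A+B}(z,z^*)=+\infty$ in this sub-case as well.

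\textbf{Main obstacle.} The hardest piece is the unbounded sub-case of Case~2, particularly when $z\in\bd\dom B$: although each $a_n$ lies in $\inte\dom B$, the convergence $a_n\to z$ allows $(b_n^*)$ (and possibly $(a_n^*)$) to escape to infinity, and one must manage both potentially divergent sequences simultaneously, isolating the directional information that the Fact~\ref{extlem}-type inequalities contribute along the normalized weak$^*$ subnet. Crucially, the convexity of $\overline{\dom A}$---which is exactly what $A$ being of type (FPV) buys us---is indispensable, since without it the approximants $a_n$ could not be placed in $\dom A\cap\inte\dom B$ with $a_n\to z$ and the Fitzpatrick evaluation above would have no starting point.
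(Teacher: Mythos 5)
Your Case~1 ($z\notin\overline{\dom A}$) is fine: Lemma~\ref{LeWExc:3} plus the accessibility of $\inte\dom B$ from $\overline{\dom B}$ (so $[x_0,z)\subseteq\inte\dom B$, putting the $a_n$ into $\dom B$ eventually) indeed lets you invoke Lemma~\ref{LeWExc:1}. But your Case~2 has a genuine gap that the hand-waving toward Lemma~\ref{LeWExc:1} does not close. You choose $a_n\in\dom A\cap\inte\dom B$ with $a_n\to z$ and \emph{arbitrary} $a_n^*\in A(a_n)$, $b_n^*\in B(a_n)$, and then must bound $\langle z-a_n,a_n^*\rangle+\langle z-a_n,b_n^*\rangle$ from below. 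Nothing controls $a_n^*$: you have used (FPV) only to make $\overline{\dom A}$ convex, not to extract a quantitative inequality, so $\langle z-a_n,a_n^*\rangle$ can perfectly well diverge to $-\infty$. Worse, the argument you propose to borrow from Case~2 of Lemma~\ref{LeWExc:1} relies decisively on the limit of $(a_n)$ lying at $x_0+\lambda(z-x_0)$ with $\lambda<1$: the contradiction there is reached by dividing $(1-\lambda)\langle z-x_0,v_\infty^*\rangle\leq0$ by $1-\lambda>0$. In your setup $a_n\to z$, so $\lambda=1$ and $z-a_n\to0$; the normalized subnet gives only the vacuous $0\leq0$, and the directional information you claim to pin down evaporates. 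When $z\in\bd\dom B$ you simply cannot force $\langle z-a_n,b_n^*\rangle\to+\infty$, nor even bound it below.

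The paper sidesteps this precisely by never letting the approximating points converge to $z$ itself. After translating so that $0\in\dom A\cap\inte\dom B$ and $(0,0)\in\gra A\cap\gra B$, it works at the scaled point $tz$ for $t\in(0,1)$: then $tz\in\inte\dom B$ (so $B$ is locally bounded near $tz$, taming the $B$-contribution), and the (FPV) property of $A$ is used \emph{quantitatively} — first on the tube $[0,tz]+\tfrac1n U_X$ to prove $tz\in\overline{\dom A}$, then on the shrinking ball $tz+\tfrac1n U_X$ to produce $(b_n,b_n^*)\in\gra A$ satisfying the explicit inequality $\langle tz,b_n^*\rangle+\langle b_n,tz^*\rangle-\langle b_n,b_n^*\rangle>t^2\langle z,z^*\rangle$. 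That inequality, together with boundedness of $B$ near $tz$, yields $F_{A+B}(tz,tz^*)\geq t^2\langle z,z^*\rangle$, and finally convexity of $F_{A+B}$ together with $F_{A+B}(0,0)=0$ lets one pass $t\to1^-$. This scaling-plus-convexity device is exactly what is missing from your argument: you need the (FPV) property to furnish a lower bound on the Fitzpatrick pairing, and you need to stay inside $\inte\dom B$ while you use it. I would suggest rebuilding Case~2 around the substitution $z\mapsto tz$ and the convexity of $F_{A+B}$ rather than trying to approach $z$ directly.
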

\begin{proof}
Clearly, $F_{A+B}(z,z^*)\geq\langle z,z^*\rangle$
 if $(z,z^*)\notin\dom F_{A+B}$.
Now suppose that $(z,z^*)\in\dom F_{A+B}$.
We can suppose that $0\in \dom A \cap \inte \dom B$
 and $(0,0)\in\gra A\cap\gra B$.
Next, we show that
\begin{align}
F_{A+B}(t z,tz^*)\geq t^2\langle z,z^*\rangle
\quad\text{and}\quad tz\in\inte\dom B,
\quad\forall t\in\left]0,1\right[.\label{See:10}\end{align}
Fix $t\in\left]0,1\right[$.
As  $0\in\inte\dom B$, $z\in\overline{\dom B}$, Fact~\ref{f:referee02c}
 and \cite[Theorem~1.1.2(ii)]{Zalinescu} imply
\begin{align}
tz\in\inte\overline{\dom B}\label{ReAu:1},
\end{align}
and  Fact~\ref{f:referee02c} strengthens this to
\begin{align}tz\in\inte\dom B.
\end{align}

We again consider two cases.

\emph{Case 1}: $tz\in\dom A$.

On selecting $a^*\in A(tz), b^*\in B(tz)$,  the definition of the Fitzpatrick function \eqref{ff:def} shows
\begin{align*}
F_{A+B}(tz,tz^*)\geq\langle tz^*,tz\rangle
+\langle tz, a^*+b^*\rangle-\langle tz, a^*+b^*\rangle=
\langle tz,tz^*\rangle.
\end{align*}
Hence \eqref{See:10} holds.

\emph{Case 2}: $tz\notin\dom A$.

If $\langle z,z^*\rangle \leq0$, then $F_{A+B}(tz,tz^*)
\geq0\geq\langle tz,tz^*\rangle$ because
$(0,0)\in\gra A\cap\gra B$.  So we assume that
\begin{align}\langle z,z^*\rangle>0.\label{FPCoS:ea1}
\end{align}
We first show that
\begin{align}
tz\in\overline{\dom A}.\label{FPC:1}
\end{align}
Set
 \begin{align*}
U_n:= [0,t z]+\tfrac{1}{n}U_X,\quad \forall n\in\NN.
\end{align*}
Since $0\in\dom A$,
 $U_n\cap \dom A\neq\varnothing$.
Since $(tz, n z^*)\notin\gra A$ and $tz\in U_n$, while
$A$ is of type (FPV),
 there is $(a_n,a^*_n)_{n\in\NN}$ in $\gra A$ with $a_n\in U_n$ such that
 \begin{align}
 \langle t z,a^*_n\rangle>n\langle t z-a_n,z^*\rangle
 +\langle a_n,a^*_n\rangle.\label{FP:4}
 \end{align}
As $a_n\in U_n$, $(a_n)_{n\in\NN}$ has a subsequence convergent
 to an element in  $\left[0,t z\right]$.
  We can assume that
\begin{align}
a_n\longrightarrow \lambda z,\quad \text{where}\quad 0\leq \lambda\leq t.\label{FPC:2}
\end{align}
As $t z\in\inte\dom B$  also $\lambda z\in\inte\dom B$,and so appealing to Fact~\ref{extlem},
 there exist $N\in\NN$ and  $K>0$ such that
\begin{align}a_n\in\inte\dom B \quad\text{and}\quad \sup_{v^*\in B(a_n)}\| v^*\|
\leq K,\quad \forall n\geq N.\label{See:1a2}
\end{align}
We claim that \begin{align}\lambda=t.\label{FPCoS:e1}
\end{align}
Suppose to the contrary that  $0\leq\lambda<t$.  As
$(a_n,a^*_n)\in\gra A$ and \eqref{See:1a2} holds, for every $n\geq N$
\begin{align*}&F_{A+B}(z,z^*)\\
&\geq\sup_{\{ v^*\in B(a_n)\}}\left[\langle a_n,z^*\rangle
+\langle z,a_n^*\rangle-\langle a_n,a_n^*\rangle
+\langle  z-a_n, v^*\rangle \right]\\
&\geq\sup_{\{ v^*\in B(a_n)\}}\left[\langle a_n, z^*\rangle
+\langle z,a_n^*\rangle-\langle a_n,a_n^*\rangle
-K\|  z-a_n\| \right]\\
&\geq\langle a_n, z^*\rangle
+\langle z,a_n^*\rangle-\langle a_n,a_n^*\rangle
-K\|  z-a_n\|
\\
&>\langle a_n, z^*\rangle
+\tfrac{1}{t}n\langle t z-a_n,z^*\rangle
+\tfrac{1}{t}\langle a_n,a^*_n\rangle
-\langle a_n,a^*_n\rangle-K\|  z-a_n\|
 \quad\text{(by \eqref{FP:4})}\\
&\geq \langle a_n, z^*\rangle+\tfrac{1}{t}n\langle t z-a_n,z^*\rangle-K\|  z-a_n\|
\quad\text{(since $\langle a_n,a^*_n\rangle\geq0$
 by $(0,0)\in\gra A$ and $t\leq 1$)}.
 \end{align*}
 Divide by $n$ on both sides of the above
  inequality and take the limit with respect to  $n$. Since \eqref{FPC:2} and $F_{A+B}(z,z^*)<+\infty$, we obtain
 \begin{align*}
(1- \tfrac{\lambda}{t})\langle z,z^*\rangle=
\langle z- \tfrac{\lambda}{t}z,z^*\rangle \leq0.\end{align*}
Since $0\leq\lambda<t$, we obtain $\langle z,z^*\rangle\leq0$,
 which contradicts \eqref{FPCoS:ea1}.
Hence $\lambda=t$ and  by \eqref{FPC:2} $tz\in\overline{\dom A}$ so that
 \eqref{FPC:1} holds.

We next show that
\begin{align}
F_{A+B}(t z,tz^*)\geq t^2\langle z,z^*\rangle.\label{FPCoS:e2}\end{align}

Set
 \begin{align*}
H_n:= t z+\tfrac{1}{n}U_X,\quad \forall n\in\NN.
\end{align*}
Note that  $H_n\cap \dom A\neq\varnothing$, since  $tz\in\overline{\dom A}\backslash {\dom A}$ by \eqref{FPC:1}.

Because $(tz, t z^*)\notin\gra A$ and $tz\in H_n$, and
$A$ is of type (FPV),
 there exists $(b_n,b^*_n)_{n\in\NN}$ in $\gra A$
such that $b_n\in H_n$  and
\begin{align}
\langle t z,b^*_n\rangle+\langle b_n,t z^*\rangle-
\langle b_n,b^*_n\rangle>t^2\langle z,z^*\rangle,
\quad \forall n\in\NN.\label{see:20}
\end{align}
As $t z\in\inte\dom B$ and $b_n\longrightarrow t z$, by Fact~\ref{extlem},
 there exist $N_1\in\NN$ and  $M>0$ such that
\begin{align}b_n\in\inte\dom B \quad\text{and}\quad \sup_{v^*\in B(b_n)}\| v^*\|
\leq M,\quad \forall n\geq N_1.\label{FPCoS:e3}
\end{align}
We now compute
\begin{align}
F_{A+B}(t z,t z^*)
&\geq\sup_{\{ c^*\in B(b_n)\}}\left[\langle b_n,t z^*\rangle
+\langle t z,b_n^*\rangle-\langle b_n,b_n^*\rangle
+\langle t z-b_n, c^*\rangle \right],\quad \forall n\geq N_1\nonumber\\
&\geq\sup_{\{ c^*\in B(b_n)\}}\left[t^2\langle z,z^*\rangle
+\langle t z-b_n,c^*\rangle \right],\quad \forall n\geq N_1
\quad\text{(by \eqref{see:20})}\nonumber\\
&\geq\sup\left[t^2\langle z,z^*\rangle
-M\|t z-b_n\| \right],\quad \forall n\geq N_1
\quad\text{(by \eqref{FPCoS:e3})}.
\end{align}
Thus,
\[F_{A+B}(t z,t z^*) \geq t^2\langle z,z^*\rangle\]
because $b_n\longrightarrow t z$. Hence $
F_{A+B}(t z,t z^*)\geq t^2\langle z,z^*\rangle$.
Thus \eqref{FPCoS:e2} holds.

Combining  the above cases, we see that \eqref{See:10} holds.
Since $(0,0)\in\gra (A+ B)$ and $A+B$ is monotone,
 we have $F_{A+B}(0,0)=\langle 0,0\rangle=0$.
Since $F_{A+B}$ is convex, \eqref{See:10} implies that
\begin{align*}
tF_{A+B}(z,z^*)=t F_{A+B}(z,z^*)+(1-t)F_{A+B}(0,0)
\geq F_{A+B}(t z,tz^*)\geq t^2\langle z,z^*\rangle,
\quad\forall t\in\left]0,1\right[.
\end{align*}
Letting $t\longrightarrow 1^{-}$ in the above inequality, we obtain
$F_{A+B}(z,z^*)\geq \langle z,z^*\rangle$.
\end{proof}

We have one more block to put in place:

\begin{proposition}\label{ProCVS:P2}
Let $A:X\To X^*$ be   of type (FPV), and let
$B:X\rightrightarrows X^*$ be maximally monotone. Suppose
 $\sta(\dom A) \cap \inte \dom B\neq \varnothing$, and
 $(z,z^*)\in\dom F_{A+B}$.
Then
$z\in\overline{\dom A}$.

\end{proposition}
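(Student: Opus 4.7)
The plan is to argue by contradiction: suppose that $(z,z^*)\in\dom F_{A+B}$ but $z\notin\overline{\dom A}$, and deduce that $F_{A+B}(z,z^*)=+\infty$. First I would pick $x_0\in\sta(\dom A)\cap\inte\dom B$ and, by translating in $X$ and $X^*$, reduce to the normalized setting $x_0=0$ with $(0,0)\in\gra A\cap\gra B$---this simplifies invocations of monotonicity against the origin.

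Since $0\in\dom A$ and $z\notin\overline{\dom A}$, Lemma~\ref{LeWExc:3} produces $(a_n,a_n^*)_{n\in\NN}\subseteq\gra A$ with $a_n\to y_0:=\lambda_0 z$ for some $\lambda_0\in[0,1)$ and $\langle z-a_n,a_n^*\rangle\to+\infty$. I would then split on the location of $y_0$. In Case~1, when $y_0\in\inte\dom B$ (which automatically includes $\lambda_0=0$), the openness of $\inte\dom B$ forces $a_n\in\inte\dom B\subseteq\dom B$ eventually, so Lemma~\ref{LeWExc:1} applied to $(a_n,a_n^*)$ immediately yields $F_{A+B}(z,z^*)=+\infty$, the desired contradiction.

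In Case~2, when $y_0\notin\inte\dom B$ (so $\lambda_0>0$), I would exploit convexity of $\inte\dom B$ (Fact~\ref{f:referee02c}) and the fact that $0\in\inte\dom B$ to define $t_1:=\sup\{t\in[0,1]\colon t\lambda_0 z\in\inte\dom B\}\in(0,1]$, so that $y_1:=t_1\lambda_0 z\in\bd\overline{\dom B}=\bd\dom B$ and $y_1\in[0,z)$. The star-shapedness assumption $0\in\sta(\dom A)$ gives $t_1 a_n\in[0,a_n]\subseteq\dom A$ with $t_1 a_n\to y_1$, so $y_1\in\overline{\dom A}$. The goal is then to manufacture a sequence $(b_n)\subseteq\dom A\cap\dom B$ with $b_n\in\bd\dom B$ and $b_n\to y_1\in[0,z)$, whereupon Lemma~\ref{LeWExc:2} forces $F_{A+B}(z,z^*)=+\infty$, contradicting $(z,z^*)\in\dom F_{A+B}$. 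The construction combines Fact~\ref{Ll:l1} (exit from $\overline{\dom B}$ along rays from $0$ through approximating points of $\dom A$) with a further invocation of the (FPV) property of $A$ near $y_1$, in the spirit of the proof of Lemma~\ref{LeWExc:3}, to ensure that the boundary approximants can be chosen inside $\gra A$ while also meeting $\dom B$.

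\textbf{Main obstacle.} The serious difficulty is Case~2, namely realizing a sequence simultaneously in $\dom A$, in $\dom B$, and on $\bd\dom B$ that converges to $y_1$. The star structure of $\dom A$ together with Fact~\ref{Ll:l1} easily locate points of $\dom A$ approaching $\bd\overline{\dom B}$, but pushing from mere membership in $\overline{\dom B}$ to membership in $\dom B$ on the boundary is the delicate point and is where the (FPV) property of $A$ must be re-applied at suitable neighborhoods of $y_1$ with test functionals calibrated, as in Lemma~\ref{LeWExc:3}, to force the desired growth and boundary incidence.
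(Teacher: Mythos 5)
Your setup is sound: normalizing to $0\in\sta(\dom A)\cap\inte\dom B$ with $(0,0)\in\gra A\cap\gra B$, invoking Lemma~\ref{LeWExc:3} to produce $(a_n,a_n^*)\in\gra A$ with $a_n\to\lambda z$, $\lambda\in[0,1)$, and $\langle z-a_n,a_n^*\rangle\to+\infty$, and then applying Lemma~\ref{LeWExc:1} in the favourable case — all of this matches the paper. But your case split is slightly off (the correct first dichotomy is whether a subsequence of $(a_n)$ lies in $\dom B$, not whether $\lambda z\in\inte\dom B$; your Case~2 still contains the easy situation where infinitely many $a_n$ lie in $\dom B$ but $\lambda z\in\bd\dom B$), and more importantly your treatment of the genuinely hard subcase is not a proof.

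Concretely, after using Fact~\ref{Ll:l1} to place $\lambda_n a_n\in\bd\overline{\dom B}\cap\dom A$, the crucial dichotomy is whether $\lambda_n a_n\in\dom B$ or not. If a subsequence lands in $\dom B$, Lemma~\ref{LeWExc:2} finishes exactly as you hope. But when $\lambda_n a_n\notin\dom B$ for all large $n$, your proposed goal — manufacture $b_n\in\dom A\cap\dom B\cap\bd\dom B$ converging into $[0,z)$ — is precisely what cannot be achieved from the data available, and the paper does not attempt it. Instead, the paper's Subcase~2.2 works strictly \emph{inside} $\dom B$: it shrinks a ball $H_n=(1-\varepsilon_n)\lambda_n a_n+\varepsilon_n\rho_0 U_X\subseteq\inte\dom B$ so that $\inf\|B(H_n)\|\to+\infty$ (this blow-up follows because $\lambda_n a_n\notin\dom B$, using local boundedness / the demiclosedness-type result \cite[Corollary~4.1]{BY1}); it then applies (FPV) for $A$ on $H_n$ with the carefully tuned test pair $\bigl(t_n\lambda_n z+(1-t_n)(1-\varepsilon_n)\lambda_n a_n,\ (1+t_n)b_n^*\bigr)$ to produce $(\widetilde{a_n},\widetilde{a_n}^*)\in\gra A\cap(H_n\times X^*)$ with $\langle z-\widetilde{a_n},\widetilde{a_n}^*\rangle\geq-\tau_0\|b_n^*\|$; and finally it pairs this with $w_n^*\in B(\widetilde{a_n})$ of enormous norm, divides by $\|w_n^*\|$, passes to a weak* convergent subnet, and extracts two incompatible sign conditions on $\langle z,w_\infty^*\rangle$ via Fact~\ref{extlem}. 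Your "Main obstacle" paragraph correctly locates the difficulty but offers only the intention to re-use (FPV) "in the spirit of Lemma~\ref{LeWExc:3}"; that is not enough, and the target sequence you propose likely does not exist, so the proof as written has a real gap at exactly the point where the proposition's content lies.
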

\begin{proof}
We can and do suppose that $0\in\sta(\dom A)\cap\inte\dom B$ and $(0,0)\in\gra A\cap\gra B$.
As before, we suppose to the contrary that
\begin{align}
z\notin\overline{\dom A}.\label{FPCoS:e5}
\end{align}
Then $z\neq0$. By the assumption that $z\notin\overline{\dom A}$,
 Lemma~\ref{LeWExc:3} implies that there
 exist $(a_n,a^*_n)_{n\in\NN}$ in $\gra A$ and $0\leq\lambda<1$ such that
 \begin{align}
 \langle z-a_n,a^*_n\rangle\longrightarrow+\infty\quad\text{and}\quad
 a_n\longrightarrow \lambda z.\label{FPCoS:e8}
 \end{align}

 We yet again consider two cases.

 \emph{Case 1}: There exists a subsequence of $(a_n)_{n\in\NN}$ in $\dom B$.

 We can suppose that $a_n\in \dom B$ for every $n\in\NN$.
 Thus by \eqref{FPCoS:e8} and Lemma~\ref{LeWExc:1},
we have  $F_{A+B}(z,z^*)=+\infty$, which contradicts our original assumption
that $(z,z^*)\in \dom F_{A+B}$.

 \emph{Case 2}: There exists $N_1\in\NN$ such that
 $a_n\not\in\dom B$ for every $n\geq N_1$.

  Now we can suppose that $a_n\notin \dom B$ for every $n\in\NN$.
Since $a_n\notin\dom B$,
  Fact~\ref{f:referee02c} and Fact~\ref{Ll:l1} shows that there  exists $\lambda_n\in\left[0,1\right]$ such that
\begin{align}
\lambda_n a_n\in\bd\overline{\dom B}.\label{FPCoS:es1}
\end{align}
By \eqref{FPCoS:e8}, we can  suppose that
\begin{align}
\lambda_n a_n\longrightarrow\lambda_{\infty} z.\label{FPCoS:es1d}
\end{align}
Since $0\in \sta(\dom A)$  and $a_n\in\dom A$, $\lambda_n a_n\in\dom A$.
Then \eqref{FPCoS:e8} implies that
 \begin{align}\lambda_{\infty}<1.\label{scvd:1}
 \end{align}

We further split Case 2 into two subcases.

\emph{Subcase 2.1}: There exists a subsequence of
 $(\lambda_n a_n)_{n\in\NN}$ in $\dom B$. We may again suppose $\lambda_n a_n\in\dom B$ for every $n\in\NN$.
Since $0\in\sta (\dom A)$ and $a_n
\in\dom A$, $\lambda_na_n\in\dom A$.  Then
by \eqref{FPCoS:es1} and \eqref{FPCoS:es1d}, \eqref{scvd:1} and Lemma~\ref{LeWExc:2},
$F_{A+B}(z,z^*)=+\infty$, which contradicts the hypothesis that $(z,z^*)\in\dom F_{A+B}$.

\emph{Subcase 2.2}: There exists $N_2\in\NN$ such that
 $\lambda_n a_n\not\in\dom B$ for every $n\geq N_2$.
We can now assume that $ \lambda_n a_n\not\in\dom B$ for every $n\in\NN$.
Thus $a_n\neq0$ for every $n\in\NN$.
Since $0\in\inte\dom B$, \eqref{FPCoS:es1} and \eqref{FPCoS:es1d}
imply that $0<\lambda_{\infty}$ and then by \eqref{scvd:1}
\begin{align}
0<\lambda_{\infty}<1.\label{PCSM:e1}
\end{align}
Since $0\in\inte\dom B$, \eqref{FPCoS:es1} implies that $\lambda_n>0$ for every $n\in\NN$.
By \eqref{FPCoS:e8}, $\|a_n-z\|\nrightarrow 0$.
Then we can and do suppose that $\|a_n-z\|\neq0$ for every $n\in\NN$.
Fix $n\in\NN$.
Since $0\in\inte\dom B$, there exists $0<\rho_0\leq1$ such that
$\rho_0 B_X\subseteq \dom B$. As $0\in\sta (\dom A)$
and $a_n\in\dom A$, $\lambda_na_n\in\dom A$.
Set \begin{align}
b_n:=\lambda_na_n\quad\text{ and take} \quad b^*_n\in A(\lambda_na_n).
\label{PCSM:e2}
\end{align}
Next we show that there exists $\varepsilon_n\in\left]0,\frac{1}{n}\right[$ such that with
$H_n:=(1-\varepsilon_n)b_n+\varepsilon_n \rho_0U_X$ and $\tau_0:=\frac{1}{\lambda_n}\left[2\|z\|+2\|a_n\|+2
+(\|a_n\|+1)\frac{2\lambda_n\|z-a_n\|}{\rho_0}\right]$, we have
\begin{align}
H_n\subseteq\dom B~\mbox{and~}
\inf\big\|B\big(H_n\big)\big\|\geq n(1+\tau_0\|b_n^*\|),~\mbox{while~}\varepsilon_n\max\{\|a_n\|,1\}<\frac{1}{2}\|z-a_n\|\lambda_n.
\label{PCSM:c1}\end{align}

For every $s\in \left]0,1\right[$,  \eqref{FPCoS:es1} and Fact~\ref{f:referee02c} imply that
$(1-s)b_n+s \rho_0 B_X\subseteq \overline{\dom B}$.
By Fact~\ref{f:referee02c} again,
$(1-s)b_n+s \rho_0 U_X\subseteq \inte\overline{\dom B}=\inte\dom B$.

Now we show the second assertion of \eqref{PCSM:c1}.  Let $k\in\NN$ and $(s_k)_{k\in\NN}$ be  a positive sequence such that  $s_k\longrightarrow 0$
when $k\longrightarrow \infty$. It suffices to show
\begin{align}
\lim_{k\rightarrow\infty}\inf\big\|B\big((1-s_k)b_n+s_k \rho_0U_X\big)\big\|=+\infty.
\label{PCSM:e7}
\end{align}
Suppose to the contrary there exist a sequence $(c_k,c^*_k)_{k\in\NN}$ in $\gra B\cap \left[\big((1-s_k)b_n+s_k \rho_0U_X\big)\times X^*\right]$ and $L>0$ such that $\sup_{k\in\NN}\|c^*_k\|\leq L$. Then
$c_k\longrightarrow b_n=\lambda_n a_n$.
By the Banach-Alaoglu Theorem
(again see \cite[Theorem~3.15]{Rudin}),
 there  exist a weak* convergent subnet, $(c^*_{\beta})_{\beta\in J}$ of
$(c^*_k)_{k\in\NN}$ such that $c^*_{\beta}\weakstarly c^*_{\infty}\in X^*$.
\cite[Corollary~4.1]{BY1} shows that $(\lambda_n a_n, c^*_{\infty})\in\gra B$,
which contradicts our assumption that $\lambda_na_n\notin \dom B$.
Hence \eqref{PCSM:e7} holds and so does \eqref{PCSM:c1}.

Set $t_n:=\frac{\varepsilon_n\rho_0}{2\lambda_n\|z-a_n\|}$ and thus $0<t_n<\frac{1}{4}$. Thus
\begin{align}
t_n \lambda_{n} z+(1-t_n)(1-\varepsilon_n)b_n\in H_n.\label{PCM:c1}
\end{align}
Next we show there exists $(\widetilde{a_n}, \widetilde{a_n}^*)_{n\in\NN}$
in $\gra A\cap (H_n\times X^*)$ such that
\begin{align}
\big\langle z-\widetilde{a_n},\widetilde{a_n}^*\big\rangle\geq-\tau_0\|b_n^*\|.\label{PCSM:c2}
\end{align}
We consider two further subcases.

\emph{Subcase 2.2a}:
$\big(t_n \lambda_{n} z+(1-t_n)(1-\varepsilon_n)b_n, (1+t_n)b^*_n\big)\in\gra A$.
Set $(\widetilde{a_n}, \widetilde{a_n}^*):=\big(t_n \lambda_{n} z+(1-t_n)(1-\varepsilon_n)b_n, (1+t_n)b^*_n\big)$. Since $(0,0)\in\gra A$,
$\langle b_n, b^*_n\rangle\geq0$.

 Then we have
\begin{align}
&\Big\langle t_n \lambda_{n} z-\widetilde{a_n},\, \widetilde{a_n}^*\Big\rangle
=\Big\langle t_n \lambda_{n} z-t_n \lambda_{n} z-(1-t_n)(1-\varepsilon_n)b_n,\, (1+t_n)b^*_n\Big\rangle\nonumber\\
&=\Big\langle-(1-t_n)(1-\varepsilon_n)b_n, (1+t_n)b^*_n\Big\rangle=
-\Big\langle(1-t^2_n)(1-\varepsilon_n)b_n, b^*_n\Big\rangle\geq
-\Big\langle b_n, b^*_n\Big\rangle.\label{PCSM:e3}
\end{align}
On the other hand, \eqref{PCSM:e2} and the monotonicity of $A$ imply that
\begin{align*}
\Big\langle t_n \lambda_{n} z+(1-t_n)(1-\varepsilon_n)b_n-b_n, t_nb^*_n\Big\rangle
=\Big\langle t_n \lambda_{n} z+(1-t_n)(1-\varepsilon_n)b_n-b_n, (1+t_n)b^*_n-b^*_n\Big\rangle\geq0.
\end{align*}
Thus
\begin{align}
\Big\langle t_n \lambda_{n} z-\left[1-(1-t_n)(1-\varepsilon_n)\right]b_n, b^*_n\Big\rangle
\geq0.\label{PCSM:e4}
\end{align}
Since $1-(1-t_n)(1-\varepsilon_n)>0$ and $\langle b_n, b_n^*\rangle
=\langle b_n-0, b_n^*-0\rangle\geq0$,
\eqref{PCSM:e4} implies that
$\langle t_n \lambda_{n} z,b^*_n\rangle\geq0$ and thus
\begin{align*}\langle z,b^*_n\rangle\geq0.
\end{align*}
Then by $\widetilde{a_n}^*=(1+t_n)b^*_n$ and $t_n\lambda_n\leq 1$,
\eqref{PCSM:e3} implies that
\begin{align*}
\Big\langle z-\widetilde{a_n},\, \widetilde{a_n}^*\Big\rangle\geq
-\Big\langle b_n,\, b^*_n\Big\rangle\geq-\|b_n\|\cdot\|b^*_n\|\geq-\|a_n\|\cdot\|b^*_n\|\geq-\tau_0\|b_n^*\|.
\end{align*}
Hence \eqref{PCSM:c2} holds.

\emph{Subcase 2.2b}: $\big(t_n \lambda_{n} z+(1-t_n)(1-\varepsilon_n)b_n, (1+t_n)b^*_n\big)\notin\gra A$.
By $0\in\sta (\dom A)$ and $a_n\in\dom A$, we have $
(1-\varepsilon_n)\lambda_na_n\in\dom A$, hence $\dom A\cap H_n\neq\varnothing$.
Since $
t_n \lambda_{n} z+(1-t_n)(1-\varepsilon_n)b_n\in H_n$ by \eqref{PCM:c1}, $\big(t_n \lambda_{n} z+(1-t_n)(1-\varepsilon_n)b_n, (1+t_n)b^*_n\big)\notin\gra A$ and  $A$ is of type (FPV),
there exists $(\widetilde{a_n}, \widetilde{a_n}^*)\in\gra A$ such that
$\widetilde{a_n}\in H_n$ and
\allowdisplaybreaks
\begin{align*}
&\Big\langle t_n \lambda_{n} z+(1-t_n)(1-\varepsilon_n)b_n
-\widetilde{a_n},
\,\widetilde{a_n}^*-(1+t_n)b^*_n\Big\rangle>0\\
&\Rightarrow\Big\langle t_n \lambda_{n} z-\left[1-(1-t_n)(1-\varepsilon_n)\right]\widetilde{a_n}+(1-t_n)(1-\varepsilon_n)(b_n
-\widetilde{a_n}),
\,\widetilde{a_n}^*-b^*_n\Big\rangle\\
&\quad>\Big\langle t_n \lambda_{n} z+(1-t_n)(1-\varepsilon_n)b_n
-\widetilde{a_n},
\,t_nb^*_n\Big\rangle\geq \Big\langle t_n \lambda_{n} z
-\widetilde{a_n},
\,t_nb^*_n\Big\rangle\quad\text{(since $\langle b_n, b^*_n\rangle\geq0$)}\\
&\Rightarrow\Big\langle t_n \lambda_{n} z-\left[1-(1-t_n)(1-\varepsilon_n)\right]\widetilde{a_n},
\,\widetilde{a_n}^*-b^*_n\Big\rangle\\
&\quad>\Big\langle (1-t_n)(1-\varepsilon_n)(b_n
-\widetilde{a_n}),
\,b^*_n-\widetilde{a_n}^*\Big\rangle+\Big\langle t_n \lambda_{n} z
-\widetilde{a_n},
\,t_nb^*_n\Big\rangle\nonumber\\
&\Rightarrow\Big\langle t_n \lambda_{n} z-\left[1-(1-t_n)(1-\varepsilon_n)\right]\widetilde{a_n},
\,\widetilde{a_n}^*-b^*_n\Big\rangle>\Big\langle t_n \lambda_{n} z
-\widetilde{a_n},
\,t_nb^*_n\Big\rangle\nonumber\\
&\Rightarrow \Big\langle t_n \lambda_{n} z-\left[t_n+\varepsilon_n-t_n\varepsilon_n\right]\widetilde{a_n},
\,\widetilde{a_n}^*\Big\rangle>\Big\langle t_n \lambda_{n} z
-\widetilde{a_n},
\,t_nb^*_n\Big\rangle+\Big\langle t_n \lambda_{n} z-\left[t_n+\varepsilon_n-t_n\varepsilon_n\right]\widetilde{a_n},
\,b^*_n\Big\rangle.
\end{align*}
Since $\langle \widetilde{a_n}, \widetilde{a_n}^*\rangle=
\langle \widetilde{a_n}-0, \widetilde{a_n}^*-0\rangle\geq0$ and $
t_n+\varepsilon_n-t_n\varepsilon_n\geq t_n\geq t_n\lambda_n$,
$\Big\langle \left[t_n+\varepsilon_n-t_n\varepsilon_n\right]\widetilde{a_n},
\,\widetilde{a_n}^*\Big\rangle\geq t_n\lambda_n\Big\langle\widetilde{a_n},
\,\widetilde{a_n}^*\Big\rangle$.  Thus
\begin{align*}
&\Big\langle t_n \lambda_{n} z-t_n\lambda_n\widetilde{a_n},
\,\widetilde{a_n}^*\Big\rangle>\Big\langle t_n \lambda_{n} z
-\widetilde{a_n},
\,t_nb^*_n\Big\rangle+\Big\langle t_n \lambda_{n} z-\left[t_n+\varepsilon_n-t_n\varepsilon_n\right]\widetilde{a_n},
\,b^*_n\Big\rangle\nonumber\\
&\Rightarrow \Big\langle \frac{t_n \lambda_{n} z-t_n\lambda_n\widetilde{a_n}}{\lambda_nt_n},
\,\widetilde{a_n}^*\Big\rangle>\Big\langle t_n \lambda_{n} z
-\widetilde{a_n},
\,\frac{1}{\lambda_n}b^*_n\Big\rangle+\Big\langle \frac{t_n \lambda_{n} z-\left[t_n+\varepsilon_n-t_n\varepsilon_n\right]\widetilde{a_n}}{\lambda_nt_n},
\,b^*_n\Big\rangle\\
&\Rightarrow \Big\langle   z-\widetilde{a_n},
\,\widetilde{a_n}^*\Big\rangle>\Big\langle t_n \lambda_{n} z
-\widetilde{a_n},
\,\frac{1}{\lambda_n}b^*_n\Big\rangle+\Big\langle z-\left[1+\frac{\varepsilon_n}{t_n}
-\varepsilon_n\right]\frac{1}{\lambda_n}\widetilde{a_n},
\,b^*_n\Big\rangle\\
&\Rightarrow \Big\langle  z-\widetilde{a_n},
\,\widetilde{a_n}^*\Big\rangle>
-\frac{1}{\lambda_n}\|b^*_n\|\big(\|z\|+\|a_n\|+1\big)
-\|b^*_n\|\Big(\| z\|+\frac{1}{\lambda_n}(\|a_n\|+1)\big(1+\frac{2\lambda_n\|z-a_n\|}{\rho_0}
\Big)\nonumber\\
&\Rightarrow \Big\langle  z-\widetilde{a_n},
\,\widetilde{a_n}^*\Big\rangle>
-\|b^*_n\|\frac{1}{\lambda_n}\left[2\|z\|+2\|a_n\|+2
+(\|a_n\|+1)\frac{2\lambda_n\|z-a_n\|}{\rho_0}
\right]=-\tau_0\|b^*_n\|.
\end{align*}
Finally, combining all the subcases, we deduce that \eqref{PCSM:c2} holds.

Since $\varepsilon_n<\frac{1}{n}$ and $\widetilde{a_n}\in H_n$,
\eqref{FPCoS:es1d} shows that
\begin{align}
\widetilde{a_n}\longrightarrow \lambda_{\infty}z.\label{PCSM:e6}
\end{align}

Take $w^*_n\in B(\widetilde{a_n})$ by \eqref{PCSM:c1}. Then by \eqref{PCSM:c1} again,
\begin{align}
\|w^*_n\|\geq n(1+\tau_0\|b^*_n\|),\quad\forall n\in\NN.\label{PCSM:e5}
\end{align}
Then by \eqref{PCSM:c2}, we have
\begin{align*}
-\tau_0\|b^*_n\|+\Big\langle z-\widetilde{a_n}, w^*_n\Big\rangle+\Big\langle z^*, \widetilde{a_n}\Big\rangle&\leq
\Big\langle z-\widetilde{a_n}, \widetilde{a_n}^*\Big\rangle+
\Big\langle z-\widetilde{a_n}, w^*_n\Big\rangle+\Big\langle z^*, \widetilde{a_n}\Big\rangle\\
&\leq F_{A+B}(z,z^*)
\end{align*}
Thus
\begin{align}
-\frac{\tau_0\|b^*_n\|}{\|w^*_n\|}+\Big\langle z-\widetilde{a_n}, \frac{w^*_n}{\|w^*_n\|}\Big\rangle+\Big\langle \frac{z^*}{\|w^*_n\|}, \widetilde{a_n}\Big\rangle
&\leq \frac{F_{A+B}(z,z^*)}{\|w^*_n\|}.\label{PCSM:c3}
\end{align}
By the Banach-Alaoglu Theorem
(see \cite[Theorem~3.15]{Rudin}),
 there  exist a weak* convergent subnet, $(\frac{w^*_i}{\|w^*_i\|})_{i\in I}$ of
$\frac{w^*_n}{\|w^*_n\|}$ such that
\begin{align}
\frac{w^*_i}{\|w^*_i\|}\weakstarly w^*_{\infty}\in X^*.\label{PCSM:c4}
\end{align}
Combine \eqref{PCSM:e6}, \eqref{PCSM:e5} and \eqref{PCSM:c4}, by $F_{A+B}(z,z^*)<+\infty$, and take the limit along the subnet in \eqref{PCSM:c3} to obtain
\begin{align*}
\Big\langle z-\lambda_{\infty} z, w^*_{\infty}\Big\rangle\leq0.
\end{align*}
Then  \eqref{PCSM:e1} shows that
\begin{align}
\Big\langle z, w^*_{\infty}\Big\rangle\leq0.\label{PCSM:c6}
\end{align}
On the other hand, since $0\in\inte\dom B$, Fact~\ref{extlem} implies that there exists
$\rho_1>0$ and $M>0$ such that
\begin{align*}
\Big\langle \widetilde{a_n}, w^*_n\Big\rangle\geq\rho_1\|w^*_n\|-(\|\widetilde{a_n}\|
+\rho_1)M.
\end{align*}
Thus
\begin{align*}
\Big\langle \widetilde{a_n}, \frac{w^*_n}{\|w^*_n\|}\Big\rangle\geq\rho_1-\frac{(\|\widetilde{a_n}\|
+\rho_1)M}{\|w^*_n\|}.
\end{align*}
Use \eqref{PCSM:e6}, \eqref{PCSM:e5} and \eqref{PCSM:c4}, and
 take the limit along the subnet in the above inequality to obtain
\begin{align*}
\Big\langle \lambda_{\infty}z, w^*_{\infty}\Big\rangle\geq\rho_1.
\end{align*}
Hence
\begin{align*}
\Big\langle z, w^*_{\infty}\Big\rangle\geq\frac{\rho_1}{\lambda_{\infty}}>0,
\end{align*}
which contradicts \eqref{PCSM:c6}.

  Combining all the above cases, we have arrived at $z\in\overline{\dom A}$.
\end{proof}

We are finally ready to prove our main result. The special case in which  $B$ is the normal cone operator of a nonempty closed convex set
 was first established by Voisei in \cite{Voisei09}.

\begin{theorem}[(FPV) Sum Theorem]\label{TePGV:1}
Let $A, B:X\To X^*$ be  maximally monotone
with $\sta(\dom A)\cap\inte\dom B\neq\varnothing$. Assume
that $A$ is of type (FPV). Then
$A+B$ is maximally monotone.
\end{theorem}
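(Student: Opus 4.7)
My plan is to verify the two hypotheses of Fact~\ref{f:referee1}: after translation, (a) that $\bigcup_{\lambda>0}\lambda[P_X(\dom F_A)-P_X(\dom F_B)]$ is a closed subspace of $X$, and (b) that the Fitzpatrick inequality $F_{A+B}(z,z^*)\geq\langle z,z^*\rangle$ holds for every $(z,z^*)\in X\times X^*$. As usual we may assume $0\in\sta(\dom A)\cap\inte\dom B$ and $(0,0)\in\gra A\cap\gra B$.

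For (a), the hypothesis $\inte\dom B\neq\varnothing$ does the work. Since $0\in\inte\dom B\subseteq P_X(\dom F_B)$ and $0\in\dom A\subseteq P_X(\dom F_A)$, the set $P_X(\dom F_A)-P_X(\dom F_B)$ already contains a neighborhood of the origin, and so its $\lambda$-scaled union over $\lambda>0$ equals the whole space $X$, which is trivially a closed subspace.

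For (b), fix $(z,z^*)$. The inequality is automatic when $F_{A+B}(z,z^*)=+\infty$, so assume $(z,z^*)\in\dom F_{A+B}$. Proposition~\ref{ProCVS:P2} immediately yields $z\in\overline{\dom A}$, and I split on whether $z\in\overline{\dom B}$. In Case~1, $z\in\overline{\dom B}$, and Proposition~\ref{ProCVS:P1} applies verbatim to give $F_{A+B}(z,z^*)\geq\langle z,z^*\rangle$. In Case~2, $z\notin\overline{\dom B}$, and I must reach a contradiction with $(z,z^*)\in\dom F_{A+B}$. The plan for Case~2 mirrors the strategy of Proposition~\ref{ProCVS:P2}: use Fact~\ref{Ll:l1} (applied to the convex set $\overline{\dom B}$, which is convex by Fact~\ref{f:referee02c}) to obtain $\lambda^*\in\left]0,1\right[$ with $\lambda^*z\in\bd\overline{\dom B}$; approximate $\lambda^*z$ by points $\lambda_n z_n$ with $z_n\in\dom A$ (such $z_n$ exist since $z\in\overline{\dom A}$, and $\lambda_n z_n\in\dom A$ because $0\in\sta(\dom A)$); on those boundary points exploit the identity $B=B+N_{\overline{\dom B}}$ (valid since $B$ is maximally monotone) to feed arbitrarily large normal-cone multiples $ky^*_n$ into the supremum defining $F_{A+B}(z,z^*)$, while the (FPV) property of $A$ produces compatible $(\widetilde{a_n},\widetilde{a_n}^*)\in\gra A$ at the same points; passing to a weak$^*$ subnet as in Lemmas~\ref{LeWExc:1} and \ref{LeWExc:2}, the strict separation inequality $\langle y^*_\infty,z-\lambda^* z\rangle=(1-\lambda^*)\langle y^*_\infty,z\rangle>0$ forces $F_{A+B}(z,z^*)=+\infty$, which is the desired contradiction.

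The principal obstacle is Case~2: one must synchronize the (FPV) selection from $\gra A$ with the normal-cone blow-up of $B$ at boundary points of $\overline{\dom B}$ that may fail to lie in $\dom B$, and in particular arrange the choice of perturbations so that both the (FPV) pick and the normal direction survive the limit. Because this bookkeeping is essentially the same as—but more intricate than—Subcase~2.2 in the proof of Proposition~\ref{ProCVS:P2}, it is natural to defer the details to Appendix~\ref{Appenc:1} while the main body carries only Case~1 together with the verification of (a) and the reduction to Propositions~\ref{ProCVS:P1} and \ref{ProCVS:P2}.
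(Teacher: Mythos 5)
Your proposal takes essentially the same approach as the paper: verify the domain condition of Fact~\ref{f:referee1} after translation, establish the Fitzpatrick inequality by combining Propositions~\ref{ProCVS:P1} and~\ref{ProCVS:P2}, and relegate the hard case $z\notin\overline{\dom B}$ (the paper's Case~2) to the appendix. One caution about your sketch of that hard case: after finding $\lambda_n$ with $\lambda_n a_n\in\bd\overline{\dom B}$, the paper splits further on whether these boundary points lie in $\dom B$. If they do, your ``feed $ky_n^*\in N_{\overline{\dom B}}$ into the supremum'' argument (this is precisely Lemma~\ref{LeWExc:2}) works; but if $\lambda_n a_n\notin\dom B$, then $B(\lambda_n a_n)=\varnothing$ and no normal-cone multiples can be fed in at those points. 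In that subcase the paper instead exploits the blow-up of $\|B(\cdot)\|$ on interior points $\widetilde{a_n}$ approaching the boundary-but-non-domain point, using (FPV) to pick $(\widetilde{a_n},\widetilde{a_n}^*)\in\gra A$ with $\langle z-\widetilde{a_n},\widetilde{a_n}^*\rangle$ bounded below while $\|B\widetilde{a_n}\|\to\infty$. Your phrase ``normal-cone blow-up of $B$ at boundary points that may fail to lie in $\dom B$'' conflates these two distinct sub-arguments. You also omit the paper's use of Lemma~\ref{rcf:01} (which requires the stronger hypothesis that $(z,z^*)$ be monotonically related to $\gra(A+B)$, obtained via a contradiction hypothesis $F_{A+B}(z,z^*)+\eta<\langle z,z^*\rangle$) to deduce $z\notin\dom A$; the paper's remark after the proof confirms this step can be circumvented, so this is a stylistic rather than a logical difference.
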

\begin{proof}
After translating the graphs if necessary, we can and do assume that
$0\in\sta(\dom A)\cap\inte\dom B$ and that $(0,0)\in\gra A\cap\gra B$.
By Fact~\ref{f:Fitz}, $\dom A\subseteq P_X(\dom F_A)$ and
 $\dom B\subseteq P_X(\dom F_{B})$.
Hence,
\begin{align}\bigcup_{\lambda>0} \lambda
\big(P_X(\dom F_A)-P_X(\dom F_{B})\big)=X.\end{align}
Thus, by Fact~\ref{f:referee1}, it suffices to show that
\begin{equation} \label{e0:ourgoal}
F_{A+ B}(z,z^*)\geq \langle z,z^*\rangle,\quad \forall(z,z^*)\in X\times X^*.
\end{equation}
Take $(z,z^*)\in X\times X^*$.
Then
\begin{align}
&F_{A+B}(z,z^*)\nonumber\\
&=\sup_{\{x,x^*,y^*\}}\left[\langle x,z^*\rangle+\langle z-x,x^*\rangle
+\langle z-x, y^*\rangle -\iota_{\gra A}(x,x^*)-\iota_{\gra
B}(x,y^*)\right].\label{see:1}
\end{align}
Suppose to the contrary that there exists $\eta>0$ such that
\begin{align}
F_{A+B}(z,z^*)+\eta<\langle z,z^*\rangle,\label{FPCoS:e20}
\end{align}
so that
\begin{align}
(z,z^*)\,\text{ is monotonically related to $\gra (A+B)$}.\label{SDFC:47}\end{align}

Then  by Proposition~\ref{ProCVS:P1} and Proposition~\ref{ProCVS:P2} ,
\begin{align}z\in\overline{\dom A}\backslash \overline{\dom B}.\label{TCoSM:e1}
\end{align}
Now  by Lemma~\ref{rcf:01},
\begin{align}  z\notin\dom A.\label{TCoSM:d1}
\end{align}
Indeed, if $z\in\dom A$,  Lemma~\ref{rcf:01} and \eqref{SDFC:47} show that
$z\in\dom B$. Thus, $z\in\dom A\cap\dom B$ and hence
$F_{A+B}(z,z^*)\geq\langle z,z^*\rangle$ which contradicts
\eqref{FPCoS:e20}. Thence we have established \eqref{TCoSM:d1}.

Thus \eqref{TCoSM:e1} implies that
there exists $(a_n, a^*_n)_{n\in\NN}$ in $\gra A$ such that
\begin{align}
a_n\longrightarrow z.\label{TCoSM:e2}
\end{align}
By \eqref{TCoSM:e1}, $a_n\notin\overline{\dom B}$  for all but finitely many terms $a_n$. We can suppose that
$a_n\notin\overline{\dom B}$ for all $n\in\NN$.
Fact~\ref{f:referee02c} and Fact~\ref{Ll:l1} show that there  exists $
\lambda_n\in\left]0,1\right[$ such that
\begin{align}
\lambda_n a_n\in\bd\overline{\dom B}.\label{TCoSM:e3}
\end{align}
By \eqref{TCoSM:e2}, we can assume that
\begin{align}\lambda_n\longrightarrow \lambda_{\infty}\in\left[0,1\right]\quad\text{and thus}\quad
\lambda_n a_n\longrightarrow\lambda_{\infty} z.\label{TCoSM:e4}
\end{align}
Then  by \eqref{TCoSM:e3} and \eqref{TCoSM:e1}\begin{align}
\lambda_{\infty}<1.\label{TCoSM:da1}
\end{align}

We consider two cases.

\emph{Case 1}: There exists a subsequence of
 $(\lambda_n a_n)_{n\in\NN}$ in $\dom B$.

We can  suppose that $\lambda_n a_n\in\dom B$ for every $n\in\NN$.
Since $0\in\sta(\dom A)$  and $a_n
\in\dom A$, $\lambda_na_n\in\dom A$.  Then
by \eqref{TCoSM:e3},\eqref{TCoSM:e4}, \eqref{TCoSM:da1} and Lemma~\ref{LeWExc:2},
$F_{A+B}(z,z^*)=+\infty$,
which contradicts \eqref{FPCoS:e20} that $(z,z^*)\in\dom F_{A+B}$.

\emph{Case 2}: There exists $N\in\NN$ such that
 $\lambda_n a_n\not\in\dom B$ for every $n\geq N$.

We can suppose that $ \lambda_n a_n\not\in\dom B$ for every $n\in\NN$.
Thus $a_n\neq0$ for every $n\in\NN$.
Following the pattern of Subcase 2.2 in the proof of Proposition~\ref{ProCVS:P2}
\footnote{We  banish the details  to Appendix~\ref{Appenc:1} to spare the readers.}, we obtain a contradiction.

Combing all the above cases, we have $F_{A+B}(z,z^*)\geq\langle z, z^*\rangle$ for all $(z,z^*)\in X\times X^*$.
Hence $A+B$ is  maximally monotone.
\end{proof}

\begin{remark}
In Case 2 in the proof of Theorem~\ref{TePGV:1} (see Appendix~\ref{Appenc:1} below), we use Lemma~\ref{rcf:01} to deduce that
$\|a_n-z\|\neq0$.  Without the help of Lemma~~\ref{rcf:01}, we may still can obtain \eqref{TCoSM:d8} as follows.
For the case of $a_n=z$, consider whether $\big((1-\varepsilon_n)b_n,0\big)=\big((1-\varepsilon_n)\lambda_nz,0\big)\in H_n\times X^*$ is in $\gra A$ or not.
We can deduce that there exists $(\widetilde{a_n}, \widetilde{a_n}^*)_{n\in\NN}$
in $\gra A\cap (H_n\times X^*)$ such that
\begin{align*}
\big\langle z-\widetilde{a_n},\widetilde{a_n}^*\big\rangle\geq0.
\end{align*}
Hence \eqref{TCoSM:d8} holds,
and the proof of Theorem~\ref{TePGV:1} can be achieved without Lemma~~\ref{rcf:01}.
\qede
\end{remark}

\section{Examples and Consequences}\label{s:cor}

We start by illustrating that the starshaped hypothesis catches operators whose domain may be non-convex and have no algebraic interior.

\begin{example}[Operators with starshaped domains]
We illustrate that there are many choices of
maximally monotone operator $A$ of type (FPV) with non-convex domain
such that $^{ic}\dom A =\inte\dom A=\varnothing $ and $\sta(\dom A)\neq\varnothing$.
Let $f:\RR^2\rightarrow\RX$ be defined by
\begin{align*}
(x,y)\mapsto\begin{cases}\max\{1-\sqrt{x},\,|y|\}\,&\text{if}\,x\geq0;\\
+\infty,&\text{otherwise}.
\end{cases}
\end{align*}
Consider an infinite dimensional Banach space $X$ containing  a nonempty closed and convex set $C$ such that $^{ic}C=\varnothing$. It is not known whether all spaces have this property but all separable or reflexive spaces certainly do \cite{BorVan}.
Define $A:(\RR^2\times X)\rightrightarrows(\RR^2\times X^*)$ by
\begin{align*}
(v,w)\rightrightarrows\Big(\partial f(v),
\partial \iota_{C}(w)\Big)=\partial F(v,w),
\end{align*}
where $F:=f\oplus\iota_{C}$.
Define $\|\cdot\|$ on $\RR^2\times X$ by $\|(v,w)\|:=\|v\|+\|w\|$.

Then $f$ is proper convex and lowers semicontinuous and so, therefore, is $F$. Indeed,
\cite[Example before Theorem~23.5, page~218]{Rock70CA}
shows that $\dom\partial f$ is not convex and consequently $\dom A$ is not convex. (Many other candidates for $f$ are given in \cite[Chapter 7]{BorVan}.)
Clearly, $A=\partial F$ is maximally monotone. Let $w_0\in C$ and $v_0=(2,0)$.
 Consider $(v_0,w_0)\in\RR^2\times X$.
 Since $v_0=(2,0)\in\inte\dom \partial f$, $v_0\in\sta(\dom\partial f)$ since $\dom f$ is convex.
 Thus $(v_0,w_0)\in\sta(\dom A)$. Since $^{ic}C=\varnothing$ and so $\inte C=\varnothing$, it follows that
 $^{ic}\dom A=\inte\dom A=\varnothing$. \cite[Theorem~48.4(d)]{Si2} shows that $A=\partial F$ is of type (FPV).\qede
\end{example}

The next example gives all the details of how to associate the support points of a convex set to a subgradient. In \cite{Holmes},   \cite{BroRoc} and \cite[Exercise~8.4.1, page~401]{BorVan} the construction is used to build empty subgradients in various Fr\'echet spaces and incomplete normed spaces.

\begin{example}[Support points]
Suppose that $X$ is separable.
We can always find  a  compact convex set $C\subseteq X$  such that $\spand C\neq X$  and $\overline{\spand C}=X$ \cite{BorVan}.
 Take $x_0 \notin \spand C$. Define $f:X\rightarrow\RX$ by
\begin{align}
f(x) :=\min \{t\in\RR\mid x+t x_0 \in C\},\quad\forall x\in X.
\end{align}
By direct computation
$f$ is proper lower semicontinuous and convex, see \cite{Holmes}.
By the definition of $f$, $\dom f=C+\RR x_0$.
Let $t\in\RR$ and $c\in C$. We shall establish that
\begin{align}
\partial f(t x_0+c)=\begin{cases}N_C(c) \cap \{y^*\in X^*\mid \langle y^*,x_0\rangle=-1\},
\,&\text{if\, $c\in\supp C$};\\
\varnothing, &\text{otherwise}.
\end{cases}.\label{Examhom:1}
\end{align}
Thence, also $\dom \partial f= \RR x_0+\supp C$.

First we show that the implication\begin{align}
tx_0+c=sx_0+d, \quad \text{where}\,t,s\in\RR, c,d\in C\quad\Rightarrow\quad
t=s \quad\text{and}\quad c=d\label{Examhom:a1}
\end{align}
holds.
Let $t,s\in\RR$ and  $c,d\in C$.
We have
$(t-s)x_0=d-c\in\spand C-\spand C=\spand C$. Since $x_0\notin \spand C$, $t=s$ and then
$c=d$. Hence we obtain \eqref{Examhom:a1}.

By \eqref{Examhom:a1}, we have
\begin{align}
f(tx_0+c)=-t,\quad\forall t\in\RR, \forall c\in C.\label{Examhom:b1}
\end{align}
 We  next show that
\eqref{Examhom:1} holds.

Since $\dom f=C+\RR x_0$,  by \eqref{Examhom:b1}, we have
\begin{align*}
&x^*\in \partial f(t x_0+c)\\
&\Leftrightarrow
\Big\langle x^*, sx_0+d-(t x_0+c)\Big\rangle\leq f(sx_0+d)-f(tx_0+c)=-s+t,\quad\forall s\in\RR, \forall d\in C\\
&\Leftrightarrow \Big\langle x^*, (s-t)x_0+(d-c)\Big\rangle\leq -s+t,\quad\forall s\in\RR, \forall d\in C\\
&\Leftrightarrow \Big\langle x^*, sx_0+(d-c)\Big\rangle\leq -s,\quad\forall s\in\RR, \forall d\in C\\
&\Leftrightarrow \Big\langle x^*, sx_0\Big\rangle\leq -s\quad\text{and}\quad\Big\langle x^*, d-c\Big\rangle\leq 0,\quad\forall s\in\RR, \forall d\in C\\
&\Leftrightarrow \Big\langle x^*, sx_0\Big\rangle\leq -s\quad\text{and}\quad
x^*\in N_C(c), x^*\neq0,\quad\forall s\in\RR\\
&\Leftrightarrow \Big\langle x^*, x_0\Big\rangle=-1,\quad
x^*\in N_C(c)\quad\text{and}\quad c\in\supp C.
\end{align*}
Hence \eqref{Examhom:1} holds.

As a concrete example of $C$ consider, for $1\le p < \infty$,  any order interval $C:=\{x \in \ell^p(\mathbb{N}) \colon \alpha \le x \le \beta\}$ where $\alpha <\beta \in \ell^p(\mathbb{N})$.
The example extends to all weakly compactly generated (WCG) spaces \cite{BorVan} with a weakly compact convex set in the role of $C$.
\qede
\end{example}

We gave the last example in part as it allows one to  better understand what the domain of a maximally monotone operator with empty interior can look like. While the star may be empty, it has been recently proven  \cite{Ves}, see also \cite{DeBVes}, that  for a closed convex function $f$ the domain of $\partial f$ is always pathwise and locally pathwise connected.

An immediate corollary of Theorem \ref{TePGV:1}  is the following which generalizes \cite[Corollary~3.9]{Yao3}.

\begin{corollary}[Convex domain]\label{CPGV:1}
Let $A, B:X\To X^*$ be  maximally monotone
with $\dom A\cap\inte\dom B\neq\varnothing$. Assume
that $A$ is of type (FPV) with convex domain. Then
$A+B$ is maximally monotone.
\end{corollary}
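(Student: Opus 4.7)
The plan is very short: this corollary is an immediate specialization of the main Theorem~\ref{TePGV:1}. The only thing to verify is that the hypothesis $\dom A\cap\inte\dom B\neq\varnothing$ of the corollary upgrades to the hypothesis $\sta(\dom A)\cap\inte\dom B\neq\varnothing$ needed by the theorem, which is where the convexity of $\dom A$ enters.

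Concretely, I would invoke the characterization stated in the preliminaries, namely that a set $C\subseteq X$ is convex if and only if $\sta C=C$. Applied to $C=\dom A$, which is convex by assumption, this yields $\sta(\dom A)=\dom A$. Therefore
\begin{equation*}
\sta(\dom A)\cap\inte\dom B \;=\; \dom A\cap\inte\dom B \;\neq\;\varnothing.
\end{equation*}
All the remaining hypotheses of Theorem~\ref{TePGV:1} hold by assumption: $A$ and $B$ are maximally monotone, and $A$ is of type (FPV). Hence Theorem~\ref{TePGV:1} applies directly and delivers the maximal monotonicity of $A+B$.

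There is really no obstacle here; the content of the corollary is that, under the stronger structural hypothesis that $\dom A$ is convex, the starshapedness requirement in the constraint qualification of Theorem~\ref{TePGV:1} collapses back to the familiar Rockafellar-type condition $\dom A\cap\inte\dom B\neq\varnothing$. Thus the corollary is an immediate packaging of the main theorem for the convex-domain case, and no new estimates, Fitzpatrick-function computations, or approximation arguments are required beyond those already carried out in the proof of Theorem~\ref{TePGV:1}.
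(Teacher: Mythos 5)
Your proof is correct and matches the paper exactly: the paper states Corollary~\ref{CPGV:1} as ``an immediate corollary of Theorem~\ref{TePGV:1}'' without writing out the argument, and the intended content is precisely the observation you make, that convexity of $\dom A$ gives $\sta(\dom A)=\dom A$ so the hypothesis of the theorem is met.
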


 An only slightly less immediate corollary is given next.

\begin{corollary}[Nonempty interior] \emph{(See \cite[Theorem~9(i)]{Bor2} or Fact~\ref{voiZalsm}.)}
\label{sumprci}
Let $A, B:X\To X^*$ be  maximally monotone
with $\inte\dom A\cap\inte\dom B\neq\varnothing$.
Then
$A+B$ is maximally monotone.
\end{corollary}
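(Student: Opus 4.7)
My plan is to derive the corollary from Theorem~\ref{TePGV:1}, whose hypotheses require verifying two things: that $A$ is of type (FPV), and that $\sta(\dom A)\cap\inte\dom B\neq\varnothing$.

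For the starshape condition, I would first show $\inte\dom A\subseteq\sta(\dom A)$. Since $\inte\dom A\neq\varnothing$, Fact~\ref{f:referee02c} yields $\inte\dom A=\inte\overline{\dom A}$ together with the convexity of $\overline{\dom A}$. Applying the classical line-segment principle of convex analysis to the convex set $\overline{\dom A}$, for every $x\in\inte\overline{\dom A}$ and every $c\in\overline{\dom A}$ we have $[x,c)\subseteq\inte\overline{\dom A}$. Specializing $c\in\dom A\subseteq\overline{\dom A}$ gives $[x,c)\subseteq\inte\overline{\dom A}=\inte\dom A\subseteq\dom A$, and together with $c\in\dom A$ this yields $[x,c]\subseteq\dom A$, so $x\in\sta(\dom A)$. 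Thus the hypothesis $\inte\dom A\cap\inte\dom B\neq\varnothing$ places this intersection inside $\sta(\dom A)\cap\inte\dom B$.

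For the type-(FPV) property of $A$, I would invoke the classical fact that any maximally monotone operator with nonempty interior of domain is of type (FPV) (cf.~Simons~\cite{Si2}). This is the single step that is not immediate from the facts already displayed in the paper, and it is the main obstacle: the other facts of the paper (Fact~\ref{f:referee01} and Fact~\ref{f:referee02a}) give sufficient conditions for type (FPV) but applying them here would require independent justification that the normal-cone sum $A+N_C$ is maximally monotone under an interior hypothesis, which is precisely of the flavour we are trying to prove.

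With both hypotheses of Theorem~\ref{TePGV:1} in place, the conclusion follows at once. As an entirely self-contained alternative, the paper's Fact~\ref{voiZalsm} applies directly: $\inte\dom A\subseteq {}^{ic}\dom A$ and $\inte\dom B\subseteq {}^{ic}\dom B$ are both nonempty, and by choosing $x_0\in\inte\dom A\cap\inte\dom B$ together with $\delta>0$ so small that $x_0+\delta B_X\subseteq\dom A\cap\dom B$, one obtains $\delta B_X=(x_0+\delta B_X)-x_0\subseteq\dom A-\dom B$, whence $0\in\inte(\dom A-\dom B)$, forcing $\aff(\dom A-\dom B)=X$ and $0\in{}^i(\dom A-\dom B)={}^{ic}(\dom A-\dom B)$. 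Fact~\ref{voiZalsm} then yields the maximal monotonicity of $A+B$ with no appeal to the type-(FPV) machinery at all.
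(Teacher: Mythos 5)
Your starshape argument matches the paper's: both use Fact~\ref{f:referee02c} together with the line-segment principle for the convex set $\overline{\dom A}$ to place $\inte\dom A$ inside $\sta(\dom A)$, and the alternative proof you sketch via Fact~\ref{voiZalsm} is correct and in fact is the parenthetical citation already given in the statement of the corollary. However, the gap you flag in your primary route --- establishing that $A$ is of type (FPV) --- is exactly the point, and the paper closes it without appealing to any external ``classical fact.'' The key observation you missed is that by the time Corollary~\ref{sumprci} is proved, Corollary~\ref{CPGV:1} (the convex-domain case of the main theorem) is already available, and one can apply it with the roles of the two operators swapped: take any nonempty closed convex $C$ with $\dom A\cap\inte C\neq\varnothing$, pick $x_1\in\dom A\cap\inte C$, and slide slightly from $x_1$ toward $x_0\in\inte\dom A\cap\inte\dom B$ to find a point in $\inte\dom A\cap\inte C=\dom N_C\cap\inte\dom A$ (using Fact~\ref{f:referee02c} and \cite[Theorem~1.1.2(ii)]{Zalinescu}). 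Since $N_C$ is of type (FPV) with convex domain by \cite[Theorem~48.4(d)]{Si2}, Corollary~\ref{CPGV:1} applied to the pair $(N_C,A)$ gives that $N_C+A$ is maximally monotone, and then Fact~\ref{f:referee02a} (Simons--Verona-Verona) yields that $A$ is of type (FPV). There is no circularity: Theorem~\ref{TePGV:1} and Corollary~\ref{CPGV:1} are proved beforehand and do not depend on the present corollary. So your worry --- ``applying them here would require independent justification that the normal-cone sum $A+N_C$ is maximally monotone under an interior hypothesis, which is precisely of the flavour we are trying to prove'' --- is resolved by the bootstrapping structure of the paper, which you did not exploit. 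Your Fact~\ref{voiZalsm} route remains a valid stand-alone proof, but it bypasses the main theorem entirely, whereas the paper's proof is designed to exhibit Corollary~\ref{sumprci} as a genuine consequence of Theorem~\ref{TePGV:1}.
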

\begin{proof}
By the assumption, there exists $x_0\in\inte\dom A\cap\inte\dom B$.
We first show that $A$ is of type (FPV).
Let $C$ be a nonempty closed convex subset of $X$,
and suppose that $\dom A\cap \inte C \neq \varnothing$.
Let $x_1\in \dom A\cap\inte C$.
Fact~\ref{f:referee02c} and \cite[Theorem~1.1.2(ii)]{Zalinescu} imply
that $\left[x_0,x_1\right[\subseteq\inte\overline{\dom A}=\inte\dom A$.
Since $x_1\in\inte C$, there exists $0<\delta<1$ such that $x_1+\delta(x_0-x_1)\in
\inte\dom A\cap\inte C$.  Then $N_C+A$ is maximally monotone by Corollary~\ref{CPGV:1}
and \cite[Theorem~48.4(d)]{Si2}.
Hence by Fact~\ref{f:referee02a}, $A$ is of type (FPV), see also \cite{Bor2}.

Since $x_0\in\inte\dom A$, Fact~\ref{f:referee02c} and \cite[Theorem~1.1.2(ii)]{Zalinescu} imply that $x_0\in\sta (\dom A)$
and hence we have $x_0\in\sta(\dom A)\cap\inte\dom B$. Then by Theorem~\ref{TePGV:1},
we deduce that $A+B$ is maximally monotone.
\end{proof}

\begin{corollary}[Linear relation]\emph{(See \cite[Theorem~3.1]{BY3}.)}
Let $A:X\To X^*$ be a maximally monotone linear relation, and let
$B: X\rightrightarrows X^*$ be maximally monotone. Suppose  that
$\dom A\cap\inte\dom B\neq\varnothing$.  Then $A+B$ is maximally
monotone.
\end{corollary}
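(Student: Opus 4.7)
The plan is to derive this corollary directly from our main result, Theorem~\ref{TePGV:1}, whose hypotheses collapse cleanly when $A$ is a maximally monotone linear relation. The two things to check are that $A$ is of type (FPV) and that the starshaped-interior constraint qualification
\[
\sta(\dom A)\cap\inte\dom B\neq\varnothing
\]
is in force; once both are verified, Theorem~\ref{TePGV:1} immediately yields the maximal monotonicity of $A+B$.

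For the first point, I would simply invoke Fact~\ref{f:referee01} (Simons): every maximally monotone linear relation is of type (FPV), so no further work is needed here.

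For the second point, the key observation is that because $\gra A$ is a linear subspace of $X\times X^*$, its image $\dom A=P_X(\gra A)$ under the continuous linear projection $P_X$ is again a linear subspace of $X$, and in particular is convex. As noted in the introduction, $C$ is convex if and only if $\sta C=C$, so $\sta(\dom A)=\dom A$. Hence the hypothesis $\dom A\cap\inte\dom B\neq\varnothing$ of the corollary coincides with the hypothesis $\sta(\dom A)\cap\inte\dom B\neq\varnothing$ of Theorem~\ref{TePGV:1}.

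There is no real obstacle: the whole corollary is a one-line application of Theorem~\ref{TePGV:1} once the linear-subspace structure of $\dom A$ is pointed out and Fact~\ref{f:referee01} is cited. I would present this as two very short sentences followed by an appeal to Theorem~\ref{TePGV:1}.
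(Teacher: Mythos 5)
Your proof is correct and follows essentially the same route as the paper: the paper cites Fact~\ref{f:referee01} together with Corollary~\ref{CPGV:1} (the convex-domain specialization of Theorem~\ref{TePGV:1}), whereas you unpack that corollary by observing directly that $\dom A$, being a linear subspace, is convex, hence $\sta(\dom A)=\dom A$, and then invoke Theorem~\ref{TePGV:1}. The two arguments are the same in substance, differing only in whether the intermediate Corollary~\ref{CPGV:1} is named explicitly.
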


\begin{proof}
Apply Fact~\ref{f:referee01} and Corollary~\ref{CPGV:1} directly.
\end{proof}

The proof of our final Corollary~\ref{domain:L1} is adapted from that of \cite[Corollary~2.10]{Yao3} and \cite[Corollary~3.3]{BY3}. Moreover, it generalizes  both \cite[Corollary~2.10]{Yao3} and \cite[Corollary~3.3]{BY3}.

\begin{corollary}[FPV property of the sum]\label{domain:L1}
Let $A, B:X\To X^*$ be  maximally monotone
with $\dom A\cap\inte\dom B\neq\varnothing$. Assume
that $A$ is of type (FPV) with convex domain.
Then $A+B$ is of type  $(FPV)$.

\end{corollary}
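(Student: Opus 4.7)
The plan is to apply Fact~\ref{f:referee02a} to $A+B$: I will verify that $A+B$ is maximally monotone and that $(A+B)+N_C$ is maximally monotone for every closed convex $C\subseteq X$ with $\dom(A+B)\cap\inte C\neq\varnothing$. The first assertion is immediate from Corollary~\ref{CPGV:1}, because convexity of $\dom A$ gives $\sta(\dom A)=\dom A$ and the standing hypothesis $\dom A\cap\inte\dom B\neq\varnothing$ is exactly what Corollary~\ref{CPGV:1} requires. So the only real task is the second assertion.

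Fix such a $C$ and pick $x_0\in\dom A\cap\inte\dom B$ (from the standing hypothesis) together with $x_1\in\dom(A+B)\cap\inte C=\dom A\cap\dom B\cap\inte C$. The key construction is a point $y$ lying in $\dom A\cap\inte\dom B\cap\inte C$: by Fact~\ref{f:referee02c} and \cite[Theorem~1.1.2(ii)]{Zalinescu} applied to $\overline{\dom B}$, the segment $\left[x_0,x_1\right[$ lies in $\inte\overline{\dom B}=\inte\dom B$; since $\inte C$ is open around $x_1$, for all sufficiently small $\delta\in\left]0,1\right[$ the point $y:=(1-\delta)x_1+\delta x_0$ lies in $\inte\dom B\cap\inte C$. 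By convexity of $\dom A$, $y$ also lies in $\dom A$.

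With $y$ in hand I would apply Theorem~\ref{TePGV:1} to $A$ and $B+N_C$. First, Corollary~\ref{sumprci} applied to $B$ and $N_C$ (maximally monotone with $\inte\dom N_C=\inte C$) gives that $B+N_C$ is maximally monotone, since $y\in\inte\dom B\cap\inte C$. Next, the open set $\inte\dom B\cap\inte C$ is contained in $\dom B\cap C=\dom(B+N_C)$, so it lies in $\inte\dom(B+N_C)$; hence $\sta(\dom A)=\dom A$ meets $\inte\dom(B+N_C)$ at $y$. Theorem~\ref{TePGV:1} then yields that $A+(B+N_C)=(A+B)+N_C$ is maximally monotone, and Fact~\ref{f:referee02a} concludes that $A+B$ is of type (FPV).

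The main obstacle is securing the constraint qualification $\sta(\dom A)\cap\inte\dom(B+N_C)\neq\varnothing$ required by Theorem~\ref{TePGV:1}; this is precisely where convexity of $\dom A$ plays its decisive role, allowing the line-segment argument above to glue an interior point of $\dom B$ and an interior point of $C$ into a single witness $y$ that simultaneously sits inside $\dom A$.
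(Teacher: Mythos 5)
Your proposal is correct and follows essentially the same route as the paper's proof: both verify the hypotheses of Fact~\ref{f:referee02a} by constructing a single witness point in $\dom A\cap\inte\dom B\cap\inte C$ via the line-segment (accessibility) argument, then invoke Corollary~\ref{sumprci} for $B+N_C$ and the sum theorem for $A$ and $B+N_C$. The only cosmetic difference is that you apply Theorem~\ref{TePGV:1} directly (noting $\sta(\dom A)=\dom A$) where the paper applies the equivalent specialization Corollary~\ref{CPGV:1}.
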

\begin{proof} By Corollary~\ref{CPGV:1}, $A+B$ is maximally monotone.
Let $C$ be a nonempty closed convex subset of $X$,
and suppose that $\dom (A+B) \cap \inte C\neq \varnothing$.
Let $x_1\in \dom A \cap \inte\dom B$ and $x_2\in \dom (A+B) \cap \inte C$.
Then $x_1,x_2\in\dom A$, $x_1\in\inte\dom B$ and
$x_2\in\dom B\cap \inte C$.
Hence $\lambda x_1+(1-\lambda)x_2\in\inte\dom B$ for every $\lambda\in
\left]0,1\right]$ by Fact~\ref{f:referee02c} and \cite[Theorem~1.1.2(ii)]{Zalinescu} and so there exists $\delta\in\left]0,1\right]$ such that
$\lambda x_1+(1-\lambda)x_2\in\inte C$ for every $\lambda\in\left[0,\delta\right]$.

Thus, $\delta x_1+(1-\delta)x_2\in\dom A\cap\inte\dom B\cap\inte C$.
By Corollary~\ref{sumprci}, $B+N_C$ is maximally monotone.
Then, by Corollary~\ref{CPGV:1} (applied  $A$ and $B+N_C$ to $A$ and $B$),
$A+B+N_C=A+(B+N_C)$ is maximally monotone.
By Fact~\ref{f:referee02a},   $A+B$ is of type  $(FPV)$.
\end{proof}

We have been unable to relax the convexity hypothesis in Corollary \ref{domain:L1}.

We finish by listing some related interesting, at least to the current authors, questions regarding the sum problem.
\begin{problem}
 Let  $A:X\rightrightarrows X^*$ be maximally monotone with convex domain.
  Is
$A$ necessarily of type (FPV)?
\end{problem}

Let us recall a problem posed by S. Simons in \cite[Problem~41.2]{Si}

\begin{problem}
Let $A:X\rightrightarrows X^*$ be of type (FPV),
let $C$ be a nonempty closed convex subset of $X$,
and suppose that $\dom A \cap \inte C\neq \varnothing$.
Is $A+N_C$ necessarily maximally monotone$?$

\end{problem}

More generally, can we relax or indeed entirely drop the starshaped hypothesis on dom $A$ in Theorem \ref{TePGV:1}?

\begin{problem}
Let $A, B:X\To X^*$ be  maximally monotone
with $\dom A\cap\inte\dom B\neq\varnothing$. Assume
that $A$ is of type (FPV).
Is $A+B$ necessarily maximally monotone$?$

\end{problem}

If all maximally monotone operators are type (FPV) this is no easier than the full sum problem. Can the results of \cite{Ves} help here?

\section*{Acknowledgment}

Jonathan  Borwein and Liangjin Yao were partially supported
by various Australian Research Council grants.

\section{Appendix }\label{Appenc:1}

\textbf{Proof of Case 2 in the proof of Theorem~\ref{TePGV:1}.}

\begin{proof}
\emph{Case 2}: There exists $N\in\NN$ such that
 $\lambda_n a_n\not\in\dom B$ for every $n\geq N$.

We can and do suppose that $ \lambda_n a_n\not\in\dom B$ for every $n\in\NN$.
Thus $a_n\neq0$ for every $n\in\NN$.

 Since $0\in\inte\dom B$, \eqref{TCoSM:e4} and \eqref{TCoSM:e3}
imply that $0<\lambda_{\infty}$ and hence by \eqref{TCoSM:da1}
\begin{align}
0<\lambda_{\infty}<1.\label{TCoSM:d2}
\end{align}
By \eqref{TCoSM:d1}, $\|a_n-z\|\neq0$ for every $n\in\NN$.

Fix $n\in\NN$.
Since $0\in\inte\dom B$, there exists $0<\rho_0\leq1$ such that
$\rho_0 B_X\subseteq \dom B$.
Since $0\in\sta(\dom A)$ and $a_n\in\dim A$, $\lambda_n a_n\in\dom A$.
Set \begin{align}
b_n:=\lambda_na_n\quad\text{ and take} \quad b^*_n\in A(\lambda_na_n).
\label{TCoSM:d5}
\end{align}
Next we show that there exists $\varepsilon_n\in\left]0,\frac{1}{n}\right[$ such that
\begin{align}
H_n\subseteq\dom B\quad\text{and}\quad
\inf\big\|B\big(H_n\big)\big\|\geq n(1+\tau_0\|b_n^*\|),\quad\varepsilon_n\max\{\|a_n\|,1\}<\frac{1}{2}\|z-a_n\|\lambda_n.
\label{TCoSM:d6}\end{align}
where $H_n:=(1-\varepsilon_n)b_n+\varepsilon_n \rho_0U_X$ and $\tau_0:=\frac{1}{\lambda_n}\left[2\|z\|+2\|a_n\|+2
+(\|a_n\|+1)\frac{2\lambda_n\|z-a_n\|}{\rho_0}\right]$.

For every $\varepsilon\in \left]0,1\right[$, by \eqref{TCoSM:e3} and Fact~\ref{f:referee02c},
$(1-\varepsilon)b_n+\varepsilon \rho_0 B_X\subseteq \overline{\dom B}$.
By Fact~\ref{f:referee02c} again,
$(1-\varepsilon)b_n+\varepsilon \rho_0 U_X\subseteq \inte\overline{\dom B}=\inte\dom B$.

Now we show the second part of \eqref{TCoSM:d6}.  Let $k\in\NN$ and $(s_k)_{k\in\NN}$ be  a positive sequence such that  $s_k\longrightarrow 0$
when $k\longrightarrow \infty$. It suffices to show
\begin{align}
\lim_{k\rightarrow\infty}\inf\big\|B\big((1-s_k)b_n+s_k \rho_0U_X\big)\big\|=+\infty.
\label{TCoSM:d7}
\end{align}
Suppose to the contrary there exist a sequence $(c_k,c^*_k)_{k\in\NN}$ in $\gra B\cap \left[\big((1-s_k)b_n+s_k \rho_0U_X\big)\times X^*\right]$ and $L>0$ such that $\sup_{k\in\NN}\|c^*_k\|\leq L$. Then
$c_k\longrightarrow b_n=\lambda_n a_n$.
By the Banach-Alaoglu Theorem
(see \cite[Theorem~3.15]{Rudin}),
 there  exist a weak* convergent subnet, $(c^*_{\beta})_{\beta\in J}$ of
$(c^*_k)_{k\in\NN}$ such that $c^*_{\beta}\weakstarly c^*_{\infty}\in X^*$.
\cite[Corollary~4.1]{BY1} shows that $(\lambda_n a_n, c^*_{\infty})\in\gra B$,
which contradicts our assumption that $\lambda_na_n\notin \dom B$.

Hence \eqref{TCoSM:d7} holds and so does \eqref{TCoSM:d6}.

Set $t_n:=\frac{\varepsilon_n\rho_0}{2\lambda_n\|z-a_n\|}$ and thus $0<t_n<\frac{1}{4}$. Thus
\begin{align}
t_n \lambda_{n} z+(1-t_n)(1-\varepsilon_n)b_n\in H_n.\label{TCSM:h1}
\end{align}
Next we show there exists $(\widetilde{a_n}, \widetilde{a_n}^*)_{n\in\NN}$
in $\gra A\cap (H_n\times X^*)$ such that
\begin{align}
\big\langle z-\widetilde{a_n},\widetilde{a_n}^*\big\rangle\geq-\tau_0\|b_n^*\|.\label{TCoSM:d8}
\end{align}
We consider two subcases.

\emph{Subcase 2.1}:
$\big(t_n \lambda_{n} z+(1-t_n)(1-\varepsilon_n)b_n, (1+t_n)b^*_n\big)\in\gra A$.

Then set $(\widetilde{a_n}, \widetilde{a_n}^*):=\big(t_n \lambda_{n} z+(1-t_n)(1-\varepsilon_n)b_n, (1+t_n)b^*_n\big)$. Since $(0,0)\in\gra A$,
$\langle b_n, b^*_n\rangle\geq0$. Then we have
\begin{align}
&\Big\langle t_n \lambda_{n} z-\widetilde{a_n},\, \widetilde{a^*_n}\Big\rangle
=\Big\langle t_n \lambda_{n} z-t_n \lambda_{n} z-(1-t_n)(1-\varepsilon_n)b_n,\, (1+t_n)b^*_n\Big\rangle\nonumber\\
&=\Big\langle-(1-t_n)(1-\varepsilon_n)b_n, (1+t_n)b^*_n\Big\rangle=
-\Big\langle(1-t^2_n)(1-\varepsilon_n)b_n, b^*_n\Big\rangle\geq
-\Big\langle b_n, b^*_n\Big\rangle.\label{TCoSM:d9}
\end{align}
On the other hand, \eqref{TCoSM:d5} and the monotonicity of $A$ imply that
\begin{align*}
\Big\langle t_n \lambda_{n} z+(1-t_n)(1-\varepsilon_n)b_n-b_n, t_nb^*_n\Big\rangle
=\Big\langle t_n \lambda_{n} z+(1-t_n)(1-\varepsilon_n)b_n-b_n, (1+t_n)b^*_n-b^*_n\Big\rangle\geq0
\end{align*}
Thus
\begin{align}
\Big\langle t_n \lambda_{n} z-\left[1-(1-t_n)(1-\varepsilon_n)\right]b_n, b^*_n\Big\rangle
\geq0.\label{TCoSM:d10}
\end{align}
Since $1-(1-t_n)(1-\varepsilon_n)>0$ and $\langle b_n, b_n^*\rangle
=\langle b_n-0, b_n^*-0\rangle\geq0$,
\eqref{TCoSM:d10} implies that
$\langle t_n \lambda_{n} z,b^*_n\rangle\geq0$ and thus
\begin{align*}\langle z,b^*_n\rangle\geq0.
\end{align*}
Then by $\widetilde{a_n}^*=(1+t_n)b^*_n$ and $t_n\lambda_n\leq 1$,
\eqref{TCoSM:d9} implies that
\begin{align*}
\Big\langle z-\widetilde{a_n},\, \widetilde{a^*_n}\Big\rangle\geq
-\Big\langle b_n,\, b^*_n\Big\rangle\geq-\|b_n\|\cdot\|b^*_n\|\geq-\|a_n\|\cdot\|b^*_n\|\geq-\tau_0\|b_n^*\|.
\end{align*}
Hence \eqref{TCoSM:d8} holds.

\emph{Subcase 2.2}: $\big(t_n \lambda_{n} z+(1-t_n)(1-\varepsilon_n)b_n, (1+t_n)b^*_n\big)\notin\gra A$.

Since $0\in\sta(\dom A)$ and $a_n\in\dom A$, we have $
(1-\varepsilon_n)\lambda_na_n\in\dom A$, hence $\dom A\cap H_n\neq\varnothing$.
Since $
t_n \lambda_{n} z+(1-t_n)(1-\varepsilon_n)b_n\in H_n$ by \eqref{TCSM:h1}, $\big(t_n \lambda_{n} z+(1-t_n)(1-\varepsilon_n)b_n, (1+t_n)b^*_n\big)\notin\gra A$ and  $A$ is of type (FPV),
there exists $(\widetilde{a_n}, \widetilde{a_n}^*)\in\gra A$ such that
$\widetilde{a_n}\in H_n$ and
\allowdisplaybreaks
\begin{align*}
&\Big\langle t_n \lambda_{n} z+(1-t_n)(1-\varepsilon_n)b_n
-\widetilde{a_n},
\,\widetilde{a_n}^*-(1+t_n)b^*_n\Big\rangle>0\\
&\Rightarrow\Big\langle t_n \lambda_{n} z-\left[1-(1-t_n)(1-\varepsilon_n)\right]\widetilde{a_n}+(1-t_n)(1-\varepsilon_n)(b_n
-\widetilde{a_n}),
\,\widetilde{a_n}^*-b^*_n\Big\rangle\\
&\quad>\Big\langle t_n \lambda_{n} z+(1-t_n)(1-\varepsilon_n)b_n
-\widetilde{a_n},
\,t_nb^*_n\Big\rangle\geq \Big\langle t_n \lambda_{n} z
-\widetilde{a_n},
\,t_nb^*_n\Big\rangle\quad\text{(since $\langle b_n, b^*_n\rangle\geq0$)}\\
&\Rightarrow\Big\langle t_n \lambda_{n} z-\left[1-(1-t_n)(1-\varepsilon_n)\right]\widetilde{a_n},
\,\widetilde{a_n}^*-b^*_n\Big\rangle\\
&\quad>\Big\langle (1-t_n)(1-\varepsilon_n)(b_n
-\widetilde{a_n}),
\,b^*_n-\widetilde{a_n}^*\Big\rangle+\Big\langle t_n \lambda_{n} z
-\widetilde{a_n},
\,t_nb^*_n\Big\rangle\nonumber\\
&\Rightarrow\Big\langle t_n \lambda_{n} z-\left[1-(1-t_n)(1-\varepsilon_n)\right]\widetilde{a_n},
\,\widetilde{a_n}^*-b^*_n\Big\rangle>\Big\langle t_n \lambda_{n} z
-\widetilde{a_n},
\,t_nb^*_n\Big\rangle\nonumber\\
&\Rightarrow \Big\langle t_n \lambda_{n} z-\left[t_n+\varepsilon_n-t_n\varepsilon_n\right]\widetilde{a_n},
\,\widetilde{a_n}^*\Big\rangle>\Big\langle t_n \lambda_{n} z
-\widetilde{a_n},
\,t_nb^*_n\Big\rangle+\Big\langle t_n \lambda_{n} z-\left[t_n+\varepsilon_n-t_n\varepsilon_n\right]\widetilde{a_n},
\,b^*_n\Big\rangle.
\end{align*}
Since $\langle \widetilde{a_n}, \widetilde{a_n}^*\rangle=
\langle \widetilde{a_n}-0, \widetilde{a_n}^*-0\rangle\geq0$ and $
t_n+\varepsilon_n-t_n\varepsilon_n\geq t_n\geq t_n\lambda_n$,
$\Big\langle \left[t_n+\varepsilon_n-t_n\varepsilon_n\right]\widetilde{a_n},
\,\widetilde{a_n}^*\Big\rangle\geq t_n\lambda_n\Big\langle\widetilde{a_n},
\,\widetilde{a_n}^*\Big\rangle$.  Thus
\begin{align*}
&\Big\langle t_n \lambda_{n} z-t_n\lambda_n\widetilde{a_n},
\,\widetilde{a_n}^*\Big\rangle>\Big\langle t_n \lambda_{n} z
-\widetilde{a_n},
\,t_nb^*_n\Big\rangle+\Big\langle t_n \lambda_{n} z-\left[t_n+\varepsilon_n-t_n\varepsilon_n\right]\widetilde{a_n},
\,b^*_n\Big\rangle\nonumber\\
&\Rightarrow \Big\langle \frac{t_n \lambda_{n} z-t_n\lambda_n\widetilde{a_n}}{\lambda_nt_n},
\,\widetilde{a_n}^*\Big\rangle>\Big\langle t_n \lambda_{n} z
-\widetilde{a_n},
\,\frac{1}{\lambda_n}b^*_n\Big\rangle+\Big\langle \frac{t_n \lambda_{n} z-\left[t_n+\varepsilon_n-t_n\varepsilon_n\right]\widetilde{a_n}}{\lambda_nt_n},
\,b^*_n\Big\rangle\\
&\Rightarrow \Big\langle   z-\widetilde{a_n},
\,\widetilde{a_n}^*\Big\rangle>\Big\langle t_n \lambda_{n} z
-\widetilde{a_n},
\,\frac{1}{\lambda_n}b^*_n\Big\rangle+\Big\langle z-\left[1+\frac{\varepsilon_n}{t_n}
-\varepsilon_n\right]\frac{1}{\lambda_n}\widetilde{a_n},
\,b^*_n\Big\rangle\\
&\Rightarrow \Big\langle  z-\widetilde{a_n},
\,\widetilde{a_n}^*\Big\rangle>
-\frac{1}{\lambda_n}\|b^*_n\|\big(\|z\|+\|a_n\|+1\big)
-\|b^*_n\|\Big(\| z\|+\frac{1}{\lambda_n}(\|a_n\|+1)\big(1+\frac{2\lambda_n\|z-a_n\|}{\rho_0}
\Big)\nonumber\\
&\Rightarrow \Big\langle  z-\widetilde{a_n},
\,\widetilde{a_n}^*\Big\rangle>
-\|b^*_n\|\frac{1}{\lambda_n}\left[2\|z\|+2\|a_n\|+2
+(\|a_n\|+1)\frac{2\lambda_n\|z-a_n\|}{\rho_0}
\right]=-\tau_0\|b^*_n\|.
\end{align*}
Hence combining all the subcases, we have \eqref{TCoSM:d8} holds.

Since $\varepsilon_n<\frac{1}{n}$ and $\widetilde{a_n}\in H_n$,
\eqref{TCoSM:e4} shows that
\begin{align}
\widetilde{a_n}\longrightarrow \lambda_{\infty}z.\label{TCoSM:d11}
\end{align}

Take $w^*_n\in B(\widetilde{a_n})$ by \eqref{TCoSM:d6}. Then by \eqref{TCoSM:d6} again,
\begin{align}
\|w^*_n\|\geq n(1+\tau_0\|b^*_n\|),\quad\forall n\in\NN.\label{TCoSM:d12}
\end{align}
Then by \eqref{TCoSM:d8}, we have
\begin{align*}
-\tau_0\|b^*_n\|+\Big\langle z-\widetilde{a_n}, w^*_n\Big\rangle+\Big\langle z^*, \widetilde{a_n}\Big\rangle&\leq
\Big\langle z-\widetilde{a_n}, \widetilde{a_n}^*\Big\rangle+
\Big\langle z-\widetilde{a_n}, w^*_n\Big\rangle+\Big\langle z^*, \widetilde{a_n}\Big\rangle\\
&\leq F_{A+B}(z,z^*)
\end{align*}
Thus
\begin{align}
-\frac{\tau_0\|b^*_n\|}{\|w^*_n\|}+\Big\langle z-\widetilde{a_n}, \frac{w^*_n}{\|w^*_n\|}\Big\rangle+\Big\langle \frac{z^*}{\|w^*_n\|}, \widetilde{a_n}\Big\rangle
&\leq \frac{F_{A+B}(z,z^*)}{\|w^*_n\|}.\label{TCoSM:d14}
\end{align}
By the Banach-Alaoglu Theorem
(see \cite[Theorem~3.15]{Rudin}),
 there  exist a weak* convergent subnet, $(\frac{w^*_i}{\|w^*_i\|})_{i\in I}$ of
$\frac{w^*_n}{\|w^*_n\|}$ such that
\begin{align}
\frac{w^*_i}{\|w^*_i\|}\weakstarly w^*_{\infty}\in X^*.\label{TCoSM:d18}
\end{align}
Combining \eqref{TCoSM:d11}, \eqref{TCoSM:d12} and \eqref{TCoSM:d18}, by $F_{A+B}(z,z^*)<+\infty$, take the limit along the subnet in \eqref{TCoSM:d14} to obtain
\begin{align*}
\Big\langle z-\lambda_{\infty} z, w^*_{\infty}\Big\rangle\leq0.
\end{align*}
Then  \eqref{TCoSM:d2} shows that
\begin{align}
\Big\langle z, w^*_{\infty}\Big\rangle\leq0.\label{TCoSM:d17}
\end{align}
On the other hand, since $0\in\inte\dom B$, Fact~\ref{extlem} implies that there exists
$\rho_1>0$ and $M>0$ such that
\begin{align*}
\Big\langle \widetilde{a_n}, w^*_n\Big\rangle\geq\rho_1\|w^*_n\|-(\|\widetilde{a_n}\|
+\rho_1)M.
\end{align*}
Thus
\begin{align*}
\Big\langle \widetilde{a_n}, \frac{w^*_n}{\|w^*_n\|}\Big\rangle\geq\rho_1-\frac{(\|\widetilde{a_n}\|
+\rho_1)M}{\|w^*_n\|}.
\end{align*}
Combining \eqref{TCoSM:d11}, \eqref{TCoSM:d12} and \eqref{TCoSM:d18},
 take the limit along the subnet in the above inequality to obtain
\begin{align*}
\Big\langle \lambda_{\infty}z, w^*_{\infty}\Big\rangle\geq\rho_1.
\end{align*}
Hence
\begin{align*}
\Big\langle z, w^*_{\infty}\Big\rangle\geq\frac{\rho_1}{\lambda_{\infty}}>0,
\end{align*}
which contradicts \eqref{TCoSM:d17}.

\end{proof}

\end{document}